\documentclass[copyright,creativecommons]{eptcs}
\providecommand{\event}{AiML 2026} 

\usepackage{aiml26}

\usepackage{iftex}

\ifpdf
  \usepackage{underscore}         
  \usepackage[T1]{fontenc}        
\else
  \usepackage{breakurl}           
\fi

\usepackage{amsmath, amssymb} 

\usepackage{amsfonts} 

\usepackage{tikz-cd} 
\usetikzlibrary{arrows.meta, positioning}

\usepackage{enumitem} 

\usepackage{bussproofs} 

\usepackage{multicol} 

\newcommand{\nBox}{\ooalign{$\Box$\cr\hidewidth\raisebox{0.05ex}{\scalebox{0.8}[1]{$\neg$}}\hidewidth\cr}}

\title{Possibly Relevant Translations 
}
\author{Søren Brinck Knudstorp
\institute{ILLC and Philosophy\\ University of Amsterdam\\ Amsterdam, the Netherlands}
\email{s.b.knudstorp@uva.nl}
}

\newcommand{\titlerunning}{Possibly Relevant}
\newcommand{\authorrunning}{S.B. Knudstorp}

\hypersetup{
  bookmarksnumbered,
  pdftitle    = {\titlerunning},
  pdfauthor   = {\authorrunning},
  pdfsubject  = {EPTCS},               
  pdfkeywords = {Relevant logic, Modal logic, Translations} 
}

\begin{document}
\maketitle

\begin{abstract}
 We develop translations from relevant logics into normal modal logics, and use them to clarify structural connections between relevant and modal logic, obtain a few corollary results, and raise questions for future work. 
\end{abstract}
The past century has witnessed the flourishing of many a new logical tradition, 
each developing along its own lines, yet with analogies identified and formalized through translation. 

A classic example is the Gödel--McKinsey--Tarski translation of intuitionistic propositional logic into classical modal logic, which has fostered a rich exchange of ideas, results, and research questions \cite{ChagrovZ97:ml}; 
its original intent of demonstrating how intuitionistic logic can be interpreted classically now seems the least of its impact.

The present paper pursues a similar, though more modest, aim for another pair of traditions: relevant logic as conceived in the Anderson--Belnap lineage and modal logic as studied in the theory of normal modal logic. 

Our main contributions are full and faithful translations from relevant logics into normal modal logics. We leave aside discussion of philosophical differences and motivations, though we hope that the translations shall as well be conceptually clarifying.  We also use the translations to sample a few corollary results and raise some questions, but leave a fuller analysis for future work.

The paper proceeds by developing a series of translations, starting with the relevant vocabulary $\{\land, \lor, \to\}$ and gradually adding $\mathsf{t}, \circ,$ and $\neg$.


\section{A basic positive translation}
\subsection{Preliminaries}\label{subsec:pre}
We begin by presenting relevant logics axiomatically.\footnote{The relevant-logic overview presented here is brief and selective. For more comprehensive expositions, I invite the reader to consult~\cite{AndersonBelnap75, AndersonBelnapDunn92, DunnRestall02,Standefer26}.} As we, at first, will be concerned with translating the $\{\land, \lor, \to\}$-fragment of the relevant language, sometimes called the positive fragment, we set out only the axioms and rules pertaining to this language.

The weakest logic of concern will be the basic relevant logic $\mathbf{B}$. It stands to relevant logic much like how  $\mathbf{K}$ stands to modal logic -- a point which our first translations will help refine. Its $\{\land, \lor, \to\}$-fragment, often denoted $\mathbf{B}^+$, admits the following Hilbert-style axiomatization.  (We follow the convention that $\land, \lor$ bind tighter than $\to$.)
\newpage


\textbf{Axiom schemes}
\begin{multicols}{2}
\begin{enumerate}
\item $\varphi \to \varphi$,

\item $\varphi \land \psi \to \varphi$ and $\varphi \land \psi \to \psi$,


\item $\varphi \to \varphi \lor \psi$ and $\psi \to \varphi \lor \psi$,


\item $[(\varphi \to \psi) \land (\varphi \to \chi)]
      \to [\varphi \to \psi \land \chi]$,

\item $[(\varphi \to \chi) \land (\psi \to \chi)]
      \to [\varphi \lor \psi \to \chi]$,

\item $[\varphi \land (\psi \lor \chi)]
      \to [(\varphi \land \psi) \lor (\varphi \land \chi)]$.
\end{enumerate}
\end{multicols}
\textbf{Rules}
\begin{center}

\begin{minipage}{0.45\textwidth}
\begin{prooftree}
\AxiomC{$\varphi$}
\AxiomC{$\varphi \to \psi$}
\RightLabel{\scriptsize modus ponens}
\BinaryInfC{$\psi$}
\end{prooftree}
\end{minipage}
\hfill
\begin{minipage}{0.45\textwidth}
\begin{prooftree}
\AxiomC{$\varphi$}
\AxiomC{$\psi$}
\RightLabel{\scriptsize adjunction}
\BinaryInfC{$\varphi \land \psi$}
\end{prooftree}
\end{minipage}

\vspace{.6em}

\begin{minipage}{0.45\textwidth}
\begin{prooftree}
\AxiomC{$\varphi \to \psi$}
\RightLabel{\scriptsize prefixing}
\UnaryInfC{$(\chi \to \varphi)\to(\chi \to \psi)$}
\end{prooftree}
\end{minipage}
\hfill
\begin{minipage}{0.45\textwidth}
\begin{prooftree}
\AxiomC{$\varphi \to \psi$}
\RightLabel{\scriptsize suffixing}
\UnaryInfC{$(\psi \to \chi)\to(\varphi \to \chi)$}
\end{prooftree}
\end{minipage}

\end{center}
\vspace{1em}
The following axiom schemes serve to axiomatize many of the more common stronger relevant logics.

\begin{enumerate}
    \item[7.] $(\varphi\to\psi)\to[(\chi\to\varphi)\to(\chi\to\psi)]$ \hfill (prefixing)
    \item[8.] $(\varphi\to\psi)\to [(\psi\to\chi)\to(\varphi\to\chi)]$ \hfill (suffixing)
    \item[9.] $(\varphi\to\psi)\land (\psi\to\chi)\to (\varphi\to\chi)$ \hfill (hypothetical syllogism)
    \item[10.] $\varphi\land(\varphi\to\psi)\to\psi$  \hfill (pseudo-modus ponens)
    \item[11.] $[\varphi\to(\varphi\to\psi)]\to(\varphi\to\psi)$ \hfill (contraction)
    \item[12.] $\varphi\to[(\varphi\to\psi)\to\psi]$ \hfill (assertion)
\end{enumerate}

For instance, all of these are theorems of the $\{\land, \lor, \to\}$-fragment of $\mathbf{R}$, denoted $\mathbf{R}^+$, and already $\{8., 11., 12.\}$ axiomatize $\mathbf{R}^+$ relative to $\mathbf{B}^+$; that is, 
\[
    \mathbf{R}^+=\mathbf{B}^+\oplus\{7., \hdots,12.\}=\mathbf{B}^+\oplus\{8.,11.,12.\},
\]
where we, for a set of formulas $\Phi$, by the notation $\mathbf{B}^+\oplus \Phi$  mean the result of adding $\Phi$ (as axiom schemes) to $\mathbf{B}^+$ and closing under the above rules. Note that in the presence of the prefixing and suffixing axioms (7. and 8., respectively), the namesake rules become redundant; for example, only the rules of modus ponens and adjunction are needed to axiomatize $\mathbf{R}^+$. The same holds for each of the following relevant logics:
\begin{align*}
    \mathbf{TW}^+&\mathrel{:=} \mathbf{B}^+\oplus \{7.,8.\} &\qquad \mathbf{TWJ}^+&\mathrel{:=}\mathbf{B}^+\oplus \{7., 8., 9.\}&\qquad \mathbf{C}^+&\mathrel{:=}\mathbf{B}^+\oplus\{7.,8.,10.\} \\
     \mathbf{T}^+&\mathrel{:=} \mathbf{B}^+\oplus \{7.,8., 11.\}&\qquad \mathbf{RW}^+&\mathrel{:=}\mathbf{B}^+\oplus\{7.,8.,12.\}
\end{align*}
One major relevant logic not axiomatizable via 7.-12. is Anderson and Belnap's logic of entailment $\mathbf{E}$. It is most perspicuously presented when additional vocabulary is available. Here, we make do by setting out $\mathbf{E}^+$ as $\mathbf{T}^+$ plus the less than elegant pair:
\begin{align*}
((\varphi\to\varphi)\to \psi)\to \psi, \qquad [((\varphi\to\varphi)\to\varphi)\land ((\psi\to\psi)\to\psi)]\to [(\varphi\land \psi\to\varphi\land \psi)\to\varphi\land \psi].\footnotemark
\end{align*}
Both are theorems of $\mathbf{R}^+$, so $\mathbf{E}^+$ is a sublogic of $\mathbf{R}^+$. More generally, the introduced logics are ordered by strength as presented in Figure~\ref{fig:relevantlogics}.\footnotetext{If one abbreviates $\Box \alpha\mathrel{:=}(\alpha\to\alpha)\to\alpha$, the latter is shortened to $\Box\varphi\land\Box\psi\to\Box(\varphi\land \psi)$. And when the language includes the Ackermann constant $\mathsf{t}$, the pair can be traded for the much simpler $(\mathsf{t}\to \varphi)\to\varphi$.}
\begin{figure}[htbp]
    \centering
    \begin{tikzpicture}[
    >=Stealth,
    baseline,
    every node/.style={font=\normalsize, inner sep=1.5pt}
]

\node (B)   at (0,0)  {$\mathbf{B}^+$};
\node (TW)  at (2.2,0) {$\mathbf{TW}^+$};
\node (RW)  at (7.7,1) {$\mathbf{RW}^+$};
\node (TWJ) at (4.4,0) {$\mathbf{TWJ}^+$};
\node (C)   at (6.6,0) {$\mathbf{C}^+$};
\node (T)   at (8.8,0) {$\mathbf{T}^+$};
\node (E)   at (11,0) {$\mathbf{E}^+$};
\node (R)   at (13.2,0) {$\mathbf{R}^+$};

\draw[->] (B) -- (TW);
\draw[->] (TW) -- (TWJ);
\draw[->] (TWJ) -- (C);
\draw[->] (C) -- (T);
\draw[->] (T) -- (E);
\draw[->] (E) -- (R);

\draw[->] (TW) to[bend left=10] (RW);
\draw[->] (RW) to[bend left=10] (R);

\end{tikzpicture}
    \caption{Arrows indicate strict inclusion $\subsetneq$. Additionally, $\mathbf{TWJ}^+\not\subseteq \mathbf{RW}^+\not\subseteq\mathbf{E}^+$.}
    \label{fig:relevantlogics}
\end{figure}
\newpage

To situate the relevant family within a bigger logical realm, let us note that the $\{\land, \lor, \to\}$-fragment of intuitionistic propositional logic $\mathbf{IPL}$ results from adding to $\mathbf{R}^+$ the axiom schema $\varphi\to (\psi\to\varphi)$.\footnote{Relevance, at least over the signature $\{\land, \lor, \to\}$, has close ties to constructivism; see \cite{WeissStandefer2026} for points of contact.} If one further adds Peirce's law $((\varphi\to\psi)\to \varphi)\to \varphi$, one gets the $\{\land, \lor, \to\}$-fragment of classical propositional logic $\mathbf{CPL}$; in fact, over $\mathbf{R}^+$, from Peirce's law we already derive $\varphi\to (\psi\to\varphi)$ \cite{Meyer90}.\footnote{Algebraically, the $\{\land, \lor, \to\}$-fragment of $\mathbf{IPL}$ corresponds to the well-studied variety of Brouwerian algebras (only that the latter include a designated element $\top$ for the term definable $p\to p$). The subvariety axiomatized by Peirce's law is the variety of generalized Boolean algebras, corresponding to the $\{\land, \lor, \to\}$- (or $\{\land, \lor, \to, \top\}$-fragment) of $\mathbf{CPL}$. Let us also mention that minimal logic \cite{Johansson37} is like the $\{\land, \lor, \to\}$-fragment of $\mathbf{IPL}$ but with a constant symbol $\bot$ for which no axioms or rules are specified.
}
\\\\
From the above axiomatic presentation, it is not at all clear what affinities relevant logic shares with modal logic (unless we count their equally confusing nomenclature). This is made somewhat clearer by their relational semantics.

Formulas of the positive relevant language are interpreted on \textit{Routley--Meyer frames and models}:
\begin{definition}[Models and frames]\label{def:basicframes}
    A \textit{Routley--Meyer frame} is a triple $\mathfrak{F}=(K, R, N)$ where $R\subseteq K\times K\times K$ is a ternary relation and $N\subseteq K$ a set whose elements are denoted \textit{normal points}, and which is subject to the conditions, writing $x\leq y$ for $\exists n\in N(Rnxy)$:
    \begin{itemize}
        \item \makebox[2.7cm][l]{(preorder)} $\leq$ is a preorder,
        \item \makebox[2.7cm][l]{(upset)} if $n\in N$ and $n\leq m$, then $m\in N$,
        \item \makebox[2.7cm][l]{(down-down-up)} if $Rxyz$, $x^-\leq x$, $y^-\leq y$  and $z\leq z^+$, then $Rx^{-}y^-z^{+}$.
    \end{itemize}
    A \textit{Routley--Meyer model} $\mathfrak{M}=(\mathfrak{F}, V)$ is a Routley--Meyer frame $\mathfrak{F}$ with a \textit{persistent valuation} $V$: if $x\in V(p)$ and $x\leq y$, then $y\in V(p)$.
\end{definition}
\begin{definition}[Semantic clauses]\label{def:clauses}
    Given a model $\mathfrak{M}=(K, R, N, V)$, a point $x\in K$, and a formula $\varphi$, we say that $x$ \textit{satisfies} $\varphi$ and write $\mathfrak{M}, x\Vdash \varphi$, or simply $x\Vdash\varphi$, based on the clauses:
    \begin{align*}
        &\mathfrak{M}, x\Vdash p && \text{iff} && x\in V(p),\\
        &\mathfrak{M}, x\Vdash \varphi\land\psi && \text{iff} && \mathfrak{M}, x\Vdash \varphi \text{ and } \mathfrak{M}, x\Vdash \psi,\\
        &\mathfrak{M}, x\Vdash \varphi\lor\psi && \text{iff} && \mathfrak{M}, x\Vdash \varphi \text{ or } \mathfrak{M}, x\Vdash \psi,\\
        &\mathfrak{M}, x\Vdash \varphi\to\psi && \text{iff} && \text{for all $y,z\in K$: if $Rxyz$ and } \mathfrak{M}, y\Vdash \varphi\text{, then } \mathfrak{M}, z\Vdash \psi.
    \end{align*}
\end{definition}
Validity is defined with respect to normal points: $\varphi$ is valid on a Routley--Meyer frame $\mathfrak{F}=(K, R, N)$, written $\mathfrak{F}\Vdash \varphi$, if for all persistent valuations $V$ and normal points $n\in N$, $(\mathfrak{F}, V), n\Vdash \varphi$; and $\varphi$ is valid on a class of frames $\mathcal{C}$, written $\mathcal{C}\Vdash \varphi$, if $\varphi$ is valid on each frame $\mathfrak{F}\in \mathcal{C}$. 
A handy fact, readily proven using persistence ($\mathfrak{M},x\Vdash \varphi$ and $x\leq y$ imply $\mathfrak{M},y\Vdash \varphi$) and reflexivity of $\leq$, is that for any frame $\mathfrak{F}=(K, R, N)$ and implication $\varphi\to\psi$,
\begin{align}
    \mathfrak{F}\Vdash \varphi\to\psi\quad \text{iff}\quad \text{for all $x\in K$ and persistent valuations $V$: if $(\mathfrak{F}, V),x\Vdash \varphi$, then $(\mathfrak{F}, V),x\Vdash \psi$.}\label{eq:handyfact} 
\end{align}
The set of validities on all Routley--Meyer frames is the basic relevant logic $\mathbf{B}^+$. For the axioms 7.-12., we list their frame correspondents below, which facilitate completeness for other relevant logics with respect to the corresponding classes of frames (as the formulas are canonical). 
There, we use a convenient equivalence between a ternary relation $R\subseteq K^3$ and a function $\cdot:K^2\to \mathcal{P}(K)$ defined by $z\in x\cdot y$ iff $Rxyz$ and lifted to arbitrary sets $X, Y\subseteq K$ by $X\cdot Y\mathrel{:=}\bigcup_{x\in X, y\in Y}x\cdot y.$
\begin{proposition}[Relevant correspondences]\label{pr:Relcorrespondence}
    Let $\mathfrak{F}=(K,R,N)$ be a Routley--Meyer frame. Then:
    \begin{alignat*}{3}
        \mathfrak{F} &\Vdash (p\to q)\to[(r\to p)\to( r\to q)] &\quad &\text{iff} \quad &(x\cdot y)\cdot z &\subseteq x\cdot (y\cdot z)  \tag*{7.} \\
        \mathfrak{F} &\Vdash (p \to q )\to [(q \to r)\to(p \to r)] &\quad &\text{iff} \quad &(x\cdot y)\cdot z &\subseteq y\cdot (x\cdot z) \tag*{8.}\\
        \mathfrak{F} &\Vdash (p\to q)\land (q\to r)\to (p\to r) &\quad &\text{iff} \quad &x\cdot y &\subseteq x\cdot (x\cdot y) \tag*{9.}\\
        \mathfrak{F} &\Vdash p \land(p \to q )\to q &\quad &\text{iff} \quad &x &\in x\cdot x \tag*{10.}\\
        \mathfrak{F} &\Vdash [p\to (p\to q)]\to (p\to q) &\quad &\text{iff} \quad &x\cdot y &\subseteq (x\cdot y)\cdot y \tag*{11.}\\
        \mathfrak{F} &\Vdash p\to [(p\to q)\to q] &\quad &\text{iff} \quad &x\cdot y &\subseteq y\cdot x \tag*{12.}   
    \end{alignat*}
\end{proposition}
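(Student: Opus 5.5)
The plan is to prove each of the six biconditionals in the same way. For the right-to-left direction (soundness) we check the formula using the handy fact \eqref{eq:handyfact}. For the left-to-right direction (correspondence) we choose minimal persistent valuations built from principal upsets.

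\textbf{Notation.} Write ${\uparrow}x=\{y: x\leq y\}$. This set is persistent because $\leq$ is transitive, and it contains $x$ by reflexivity. By (down-down-up), $x\cdot y$ is upward closed, and so is $X\cdot Y$ for any $X,Y\subseteq K$. The inclusions are read pointwise, as universally quantified over $x,y,z\in K$; for instance, 7.\ says that $Rxyu$ and $Ruzw$ imply that there is $v$ with $Ryzv$ and $Rxvw$.

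\textbf{Soundness.} By \eqref{eq:handyfact}, an outer implication only needs to be checked pointwise, with the outer $x$ in the role of the antecedent-satisfying point.
\begin{itemize}
\item For 7.: suppose $x\Vdash p\to q$ and we want $x\Vdash (r\to p)\to(r\to q)$. Given $Rxyu$ with $y\Vdash r\to p$, and $Ruzw$ with $z\Vdash r$, we must show $w\Vdash q$. The inclusion yields $v$ with $Ryzv$ and $Rxvw$. Then $v\Vdash p$, hence $w\Vdash q$.
\item For 8.: argue the same way with the roles of $x$ and $y$ swapped.
\item For 9.: if $x\Vdash p\to q$, $x\Vdash q\to r$, $Rxyz$ and $y\Vdash p$, the inclusion gives $v\in x\cdot y$ with $z\in x\cdot v$. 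Then $v\Vdash q$, so $z\Vdash r$.
\item For 10.: this is immediate from $Rxxx$.
\item For 11.: if $x\Vdash p\to(p\to q)$, $Rxyz$ and $y\Vdash p$, we get $v$ with $Rxyv$ and $Rvyz$. Then $v\Vdash p\to q$, so $z\Vdash q$.
\item For 12.: if $x\Vdash p$ and $Rxyz$ with $y\Vdash p\to q$, then $Ryxz$, so $z\Vdash q$.
\end{itemize}

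\textbf{Correspondence.} Conversely, assume the frame condition fails and refute the formula at a point, which by \eqref{eq:handyfact} refutes validity.
\begin{itemize}
\item For 12.: suppose $Rxyz$ but not $Ryxz$. Put $V(p)={\uparrow}x$ and $V(q)=y\cdot x$; the latter is persistent because it is an upset. Then $x\Vdash p$. Also $y\Vdash p\to q$: if $Ryx'w$ with $x\leq x'$, then $Ryxw$ by (down-down-up), so $w\in V(q)$. Finally $z\notin V(q)$, so $x\not\Vdash (p\to q)\to q$.
\item For 10.: take $V(p)={\uparrow}x$ and $V(q)=x\cdot x$, so that $x\Vdash p\land (p\to q)$ while $x\notin V(q)$.
\item For 11.: given $Rxyz$ with $z\notin (x\cdot y)\cdot y$, set $V(p)={\uparrow}y$ and $V(q)=(x\cdot y)\cdot y$. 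Then $x\Vdash p\to(p\to q)$, by using (down-down-up) twice to push $y'\geq y$ back down to $y$. But $x\not\Vdash p\to q$, since $Rxyz$, $y\Vdash p$ and $z\not\Vdash q$.
\item For 9.: set $V(p)={\uparrow}y$, $V(q)=x\cdot y$ and $V(r)=x\cdot(x\cdot y)$.
\item For 7.: given $Rxyu$, $Ruzw$ with $w\notin x\cdot(y\cdot z)$, set $V(r)={\uparrow}z$, $V(p)=y\cdot z$ and $V(q)=x\cdot(y\cdot z)$. Then $x\Vdash p\to q$ and $y\Vdash r\to p$, while $w\not\Vdash q$. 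So $u\not\Vdash r\to q$, and hence $x\not\Vdash (r\to p)\to(r\to q)$.
\item For 8.: the same construction with the roles of $x$ and $y$ exchanged, namely $V(p)={\uparrow}z$, $V(q)=x\cdot z$ and $V(r)=y\cdot(x\cdot z)$.
\end{itemize}

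\textbf{Main obstacle.} The delicate step is verifying that these valuations are persistent and that the intermediate implications, such as $x\Vdash p\to q$, really hold. In each case this comes down to the fact that sets of the form $X\cdot Y$ are upsets, and that antecedent points above $y$ or $z$ can be replaced by $y$ or $z$ themselves via (down-down-up). So I would first isolate a lemma: for persistent $A$ and any $x$, we have $x\Vdash$ ``$A\to B$'' whenever $x\cdot A\subseteq B$, and for principal $A={\uparrow}a$ this holds exactly when $x\cdot a\subseteq B$. With that lemma in hand, each case becomes a routine unfolding.
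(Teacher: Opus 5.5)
Your proof is correct. The paper gives no proof of this proposition: it cites these as standard frame correspondents from the relevant-logic literature, and calls its modal analogue a routine exercise that parallels them. Your argument is exactly that standard one: soundness checked pointwise via \eqref{eq:handyfact}, and the converse by minimal persistent valuations built from principal upsets ${\uparrow}a$ and products $X\cdot Y$, which are upsets by (down-down-up). All six cases go through, including case 9, where you only state the valuation but the remaining checks are routine.
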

With this, we turn to the target modal logic.\footnote{The paper assumes basic familiarity with normal modal logic. For a more detailed treatment of the subject, the reader is referred to standard textbooks such as~\cite{bluebook, ChagrovZ97:ml}.} The language is the modal language with a single binary modality $\Diamond$; that is, the language generated by the grammar
\[
    \varphi\mathrel{::=} \bot\hspace{0.1cm}|\hspace{0.1cm} \top \hspace{0.1cm}|\hspace{0.1cm}p \hspace{0.1cm}| \hspace{0.1cm} \neg\varphi\hspace{0.1cm}|\hspace{0.1cm}\varphi\lor\varphi\hspace{0.1cm}|\hspace{0.1cm}\Diamond(\varphi,\varphi).
\]
Conjunction `$\land$' and material implication `$\supset$' are defined in the usual way. 

Semantics for the modal language are provided by \textit{Kripke frames} -- pairs $\mathbb{F}=(W,R)$ such that $R\subseteq W^3$ -- and \textit{Kripke models} $\mathbb{M}=(\mathbb{F},V)$, which are frames $\mathbb{F}=(W,R)$ with a \textit{valuation} $V:\mathsf{Prop}\to \mathcal{P}(W)$. The semantic clauses are the usual ones. As it will make later proofs simpler, we follow the convention that the modality is evaluated at the first point of a triple $(w,v,u)\in R$; i.e., given a model $\mathbb{M}=(W,R,V)$ and a point $w\in W$,
\begin{align*}
        &\mathbb{M},w\vDash \Diamond(\varphi,\psi) && \text{iff} && \text{there exist $v,u\in W$ such that $Rwvu$, } \mathbb{M},v\vDash \varphi,\text{ and } \mathbb{M},u\vDash \psi.
\end{align*}
Having in mind the Routley--Meyer clause for implication, it is particularly suggestive to observe that
\begin{align}
        &\mathbb{M},w\vDash \neg \Diamond(\varphi,\neg \psi) && \text{iff} && \text{for all $v,u\in W$: if $Rwvu$ and } \mathbb{M}, v\Vdash \varphi\text{, then } \mathbb{M}, u\Vdash \psi.\label{eq:defImp}
\end{align}
The set of validities on all frames is the least normal modal logic $\mathbf{K}_2$, the analog of $\mathbf{K}$ in the language with a binary, rather than unary, diamond. Given a set of modal formulas $\Phi$, we write $\mathbf{K}_2\oplus \Phi$ for the least normal modal logic containing $\Phi$.

\paragraph{Notation.} We maintain a notational distinction between our two semantic frameworks: we reserve the symbol `$\Vdash$' and fraktur letters ($\mathfrak{M}$) for Routley--Meyer models, and the symbol `$\vDash$' and blackboard bold letters ($\mathbb{M}$) for Kripke models. Also, given a formula $\varphi$ in the relevant language and a class of Routley--Meyer frames $\mathcal{C}$, we write $\mathcal{C}\Vdash\varphi$ if $\varphi$ is valid on $\mathcal{C}$; and for the analogous notion in the modal setting, we write $\mathcal{C}\vDash\varphi$. However, in both frameworks `$\vdash$' denotes derivability (or theoremhood) and subscripts indicate the logic, i.e., for a relevant logic $\mathbf{L}$, we write $\vdash_{\mathbf{L}}\varphi$ if $\varphi$ is a theorem of $\mathbf{L}$; and for modal logics $\mathbf{M}$, we likewise write $\vdash_{\mathbf{M}}\varphi$.

\subsection{Proof outline}
The first goal is to define a translation $^\star$ such that for all formulas $\varphi, \psi$ of the positive relevant language,
\begin{align}
    \vdash_{\mathbf{B}^+}\varphi\to \psi\qquad \text{iff}\qquad  \vdash_{\mathbf{K}_2}\varphi^\star\supset\psi^\star.\label{basictr}
\end{align}
Afterwards, we extend the translation across further relevant logics and then to arbitrary formulas in the positive language (not only those with main connective $\to$). In a later section, we will also translate relevant logics in languages with additional relevant vocabulary.

We proceed to outline the proof strategy but defer most proof steps to the appendix. The proof will be carried out semantically by defining a function $F$ that takes \textit{pointed Kripke models} (model-world pairs $\mathbb{M},w$) and returns \textit{pointed Routley--Meyer models} (model-point pairs $\mathfrak{M},x$), subject to the following adjunction pattern:
\begin{align}
    F(\mathbb{M},w)&\Vdash \alpha\qquad \text{iff}\qquad \mathbb{M},w\vDash \alpha^\star.\footnotemark\label{eq:chu}
\end{align} 
This\footnotetext{It may be worth footnoting that this is an instance of a \textit{Chu transform}~\cite{Chu1979} between satisfaction relations, i.e. a morphism of Chu spaces over $2$. Within situation semantics and formal concept analysis, this is also known as an \textit{infomorphism}. See \cite{Benthem2026} for more on Chu transforms and translations.
} ensures left-to-right of~\eqref{basictr} (fullness of the translation): if $\mathbb{M},w\vDash \varphi^\star$, then $F(\mathbb{M},w)\Vdash \varphi$, whence $\vdash_{\mathbf{B}^+}\varphi\to \psi$ (and soundness for $\mathbf{B}^+$) imply $F(\mathbb{M},w)\Vdash \psi$ (cf.~\eqref{eq:handyfact}), so we have $\mathbb{M},w\vDash \psi^\star$, which suffices by completeness for $\mathbf{K}_2$.

For the converse direction (faithfulness, or conservativity, of the translation), it suffices to show that $F$ is essentially surjective, up to satisfaction equivalence; i.e., that for all pointed Routley--Meyer models $(\mathfrak{M},x)$, there is a pointed Kripke model $(\mathbb{M},w)$ such that for all $\alpha$,
\[
    \mathfrak{M},x\Vdash \alpha \qquad \text{iff}\qquad F(\mathbb{M},w)\Vdash \alpha.
\]
It will, however, be more convenient (and equivalent) to directly define a function $G$ that takes pointed Routley--Meyer models and returns pointed Kripke models, subject to the following:
\begin{align}
    \mathfrak{M},x&\Vdash \alpha\qquad \text{iff}\qquad G(\mathfrak{M},x)\vDash \alpha^\star.\label{eq:faithfulness}
\end{align}
Then, whenever $\mathfrak{M},x\Vdash \varphi$, we also have $G(\mathfrak{M},x)\vDash \varphi^\star$, so from $\vdash_{\mathbf{K}_2}\varphi^\star\supset \psi^\star$ (and soundness for $\mathbf{K}_2$) we get $G(\mathfrak{M},x)\vDash \psi^\star$, hence $\mathfrak{M},x\Vdash \psi$, which, by~\eqref{eq:handyfact} and completeness for $\mathbf{B}^+$, is as required.

In sum, we shall define a triple consisting of a translation of formulas $^\star$ and two model transformations $F:\{(\mathbb{M},w)\} \rightleftarrows \{(\mathfrak{M},x)\}:G$ fulfilling~\eqref{eq:chu} and~\eqref{eq:faithfulness}. 
\subsection{Basic translation}
We define $^\star$ as follows, using an abbreviated operator in the modal language $\cdot \to\cdot \mathrel{:=}\neg \Diamond(\cdot, \neg\cdot)$ (recall~\eqref{eq:defImp}).
\begin{align*}
    p^\star &&\mathrel{:=}&& &p   && && &&  (\varphi\lor \psi)^\star&&\mathrel{:=}&& \varphi^\star \lor \psi^\star\\
    (\varphi\to \psi)^\star&&\mathrel{:=}&& &\varphi^\star\to\psi^\star  && && &&  (\varphi\land \psi)^\star&&\mathrel{:=}&& \varphi^\star\land \psi^\star.
\end{align*}
\begin{theorem}\label{th:translationB+}
    For all formulas $\varphi, \psi$ of the positive relevant language, 
\begin{align*}
    \vdash_{\mathbf{B}^+}\varphi\to \psi\qquad \text{iff}\qquad  \vdash_{\mathbf{K}_2}\varphi^\star\supset\psi^\star.
\end{align*}
\end{theorem}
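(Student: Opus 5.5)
The plan is to follow the proof outline: build the two model transformations $F$ and $G$, prove the adjunction patterns \eqref{eq:chu} and \eqref{eq:faithfulness} by induction on $\alpha$, and then run the two arguments given there, using soundness and completeness of $\mathbf{B}^+$ for Routley--Meyer frames, of $\mathbf{K}_2$ for Kripke frames, and the handy fact \eqref{eq:handyfact}. Both transformations leave the ternary relation essentially unchanged. The clause for $\to$ in Definition~\ref{def:clauses} and the clause \eqref{eq:defImp} for the abbreviated modal $\to$ are then literally the same condition, and the clauses for atoms, $\land$ and $\lor$ match trivially. What separates the two frameworks is the extra structure on the Routley--Meyer side: normal points, the preorder $\leq$ with its frame conditions, and persistence.

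\emph{The transformation $G$.} Given a pointed Routley--Meyer model $(K,R,N,V),x$, set $G(\mathfrak{M},x):=((K,R,V),x)$; that is, simply forget $N$. Any valuation is admissible on the Kripke side, so this is a Kripke model. A straightforward induction on $\alpha$ gives $\mathfrak{M},y\Vdash\alpha$ iff $(K,R,V),y\vDash\alpha^\star$ for all $y\in K$. The only nontrivial case is $\to$, and there the two clauses coincide by \eqref{eq:defImp}. This establishes \eqref{eq:faithfulness}, and with it faithfulness (the right-to-left direction of the theorem), exactly as in the outline.

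\emph{The transformation $F$.} This is the step I expect to need some care, since an arbitrary Kripke frame has no normal points and its valuation need not be persistent. Given $(W,R,V),w$, I will add a fresh point $n\notin W$. Set $K:=W\cup\{n\}$, $N:=\{n\}$ and $R':=R\cup\{(n,y,y): y\in K\}$, keep $V$ (so $n$ satisfies no atoms), and let $F(\mathbb{M},w):=((K,R',N,V),w)$.

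I then verify the frame conditions. By construction, $x\leq y$ iff $x=y$, so $\leq$ is the identity; in particular it is a preorder, which is where reflexivity (needed for \eqref{eq:handyfact}) comes from. The upset condition is trivial, as is down-down-up, since $\leq$ is identity. Every valuation is persistent for the same reason.

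For $y\in W$, the triples in $R'$ with first coordinate $y$ are exactly those in $R$. Since the $\to$ clause only inspects triples starting at the evaluation point, an induction on $\alpha$, simultaneous for all $y\in W$, gives $F(\mathbb{M},w)\Vdash\alpha$ iff $\mathbb{M},w\vDash\alpha^\star$. The points $z$ reached in the $\to$ case lie in $W$ because they are $R$-successors of $y\in W$, so the induction hypothesis applies to them. This proves \eqref{eq:chu}.

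\emph{Concluding fullness.} For the left-to-right direction, suppose $\vdash_{\mathbf{B}^+}\varphi\to\psi$ and $\mathbb{M},w\vDash\varphi^\star$. By soundness, $\varphi\to\psi$ is valid on the frame of $F(\mathbb{M},w)$. Then \eqref{eq:handyfact} applied at the point $w$ together with \eqref{eq:chu} yields $\mathbb{M},w\vDash\psi^\star$. Completeness of $\mathbf{K}_2$ then gives $\vdash_{\mathbf{K}_2}\varphi^\star\supset\psi^\star$.

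The main obstacle is thus only the design of $F$: adding the single normal point $n$ that is reflexive in the sense $Rnyy$ for every $y$ makes $\leq$ the identity, and this discharges all the frame and persistence conditions at once.
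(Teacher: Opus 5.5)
Your proposal is correct and is essentially the paper's own proof. In Lemmas~\ref{Kripke-Routley} and~\ref{Routley-Kripke}, $G$ just forgets $N$, and $F$ adjoins a fresh normal point $0$ with $R' = R\cup\{(0,x,x)\mid x\in W\cup\{0\}\}$, followed by the same inductions. Your observation that $\leq$ becomes the identity is a clean way to discharge the frame and persistence conditions that the paper only calls ``readily verified.''
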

\begin{proof}
    \eqref{eq:chu} is proven in Lemma~\ref{Kripke-Routley} and~\eqref{eq:faithfulness} in Lemma~\ref{Routley-Kripke}.
\end{proof}
Next, we show how the $^\star$-translation extends, and fails to extend, to further relevant logics. For this, we use a modal version of Proposition~\ref{pr:Relcorrespondence} and the earlier identification of ternary relations $R\subseteq W^3$ with functions $\cdot:W^2\to\mathcal{P}(W)$.
\begin{proposition}[Modal correspondences]\label{pr:Modcorrespondence}
    Let $\mathbb{F}=(W,R)$ be a Kripke frame. Then:
    \begin{alignat*}{3}
        \mathbb{F}&\vDash (p\to q)^\star\supset[(r\to p)\to( r\to q)]^\star &\quad &\text{iff} \quad &(x\cdot y)\cdot z &\subseteq x\cdot (y\cdot z) \\
        \mathbb{F}&\vDash (p \to q)^\star\supset [(q \to r)\to(p \to r)]^\star &\quad &\text{iff} \quad &(x\cdot y)\cdot z &\subseteq y\cdot (x\cdot z) \\
        \mathbb{F}&\vDash [(p\to q)\land (q\to r)]^\star\supset (p\to r)^\star &\quad &\text{iff} \quad &x\cdot y &\subseteq x\cdot (x\cdot y) \\
        \mathbb{F}&\vDash [p \land(p \to q)]^\star\supset q^\star &\quad &\text{iff} \quad &x &\in x\cdot x\\
        \mathbb{F} &\vDash [p\to (p\to q)]^\star\supset (p\to q)^\star &\quad &\text{iff} \quad &x\cdot y &\subseteq (x\cdot y)\cdot y \\
        \mathbb{F} &\vDash p^\star\supset [(p\to q)\to q]^\star &\quad &\text{iff} \quad &x\cdot y &\subseteq y\cdot x
    \end{alignat*}
\end{proposition}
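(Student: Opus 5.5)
The plan is to prove each of the six equivalences directly from the semantic clause \eqref{eq:defImp}, by the usual two-direction argument for correspondence results. Note first that validity on a Kripke frame $\mathbb{F}=(W,R)$ means truth at every world under every valuation. No persistence or normality is involved. So, unlike in Proposition~\ref{pr:Relcorrespondence}, the refuting direction may use arbitrary (in particular singleton) valuations. By \eqref{eq:defImp}, the abbreviated operator satisfies $\mathbb{M},w\vDash A\to B$ iff $w\cdot\llbracket A\rrbracket\subseteq\llbracket B\rrbracket$, where $\llbracket\cdot\rrbracket$ denotes truth sets. Each translated formula thus unfolds into a statement about images under $\cdot$, and each frame condition is read as universally quantified over $x,y,z\in W$.

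\emph{Frame condition $\Rightarrow$ validity.} Fix a valuation and a world, assume the antecedent, and chase points. For the first (prefixing) item, assume $w\cdot\llbracket p\rrbracket\subseteq\llbracket q\rrbracket$. Take $u\in w\cdot v$ with $v\cdot\llbracket r\rrbracket\subseteq\llbracket p\rrbracket$, and let $t\in u\cdot s$ with $s\vDash r$. Then $t\in(w\cdot v)\cdot s\subseteq w\cdot(v\cdot s)$, so $t\in w\cdot m$ for some $m\in v\cdot s$. Since $v\vDash r\to p$ and $s\vDash r$, we get $m\vDash p$, and hence $t\vDash q$. The suffixing, hypothetical-syllogism and contraction items go the same way, using the respective inclusion to reroute the witness. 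For the pseudo-modus-ponens item, $x\in x\cdot x$ gives $q$ at $x$ directly. For the assertion item, take $u\in x\cdot y$ with $y\cdot\llbracket p\rrbracket\subseteq\llbracket q\rrbracket$ and $x\vDash p$; then $u\in y\cdot x$ yields $u\vDash q$.

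\emph{Validity $\Rightarrow$ frame condition.} Argue contrapositively with minimal valuations. Given points witnessing failure of the inclusion, interpret the variables so that the antecedent holds by construction and the consequent fails at the witness.
\begin{itemize}
\item Prefixing: $V(r)=\{z\}$, $V(p)=y\cdot z$, $V(q)=x\cdot(y\cdot z)$, evaluated at $x$.
\item Suffixing: $V(p)=\{z\}$, $V(q)=x\cdot z$, $V(r)=y\cdot(x\cdot z)$, evaluated at $x$ with $y$ as the witness for $q\to r$.
\item Hypothetical syllogism: $V(p)=\{y\}$, $V(q)=x\cdot y$, $V(r)=x\cdot(x\cdot y)$.
\item Pseudo-modus ponens: $V(p)=\{x\}$, $V(q)=x\cdot x$, so $x\vDash p\land(p\to q)$, while $x\nvDash q$ iff $x\notin x\cdot x$.
\item Contraction: $V(p)=\{y\}$, $V(q)=(x\cdot y)\cdot y$.
\item Assertion: $V(p)=\{x\}$, $V(q)=y\cdot x$, evaluated at $x$ with $y$ witnessing $p\to q$.
\end{itemize}
In each case one checks that the antecedent holds at $x$ (for instance, in the contraction case every $u\in x\cdot y$ satisfies $u\cdot\{y\}\subseteq V(q)$). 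A point in the left-hand set of the inclusion but outside the right-hand set then refutes the consequent.

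The main obstacle is bookkeeping rather than ideas: getting the argument order right under the paper's convention (the modality is evaluated at the first coordinate, so $w\vDash A\to B$ concerns $w\cdot\llbracket A\rrbracket$), and matching the roles of $x,y,z$ in the stated inclusions. This is most delicate for the suffixing item, where the outer implication's witness is $y$ and the inner one is $x$. I would therefore write out suffixing in full and treat the others more briefly.
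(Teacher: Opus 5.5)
Your proposal is correct and is essentially the argument the paper has in mind: the paper only calls this a routine exercise paralleling the relevant-logic correspondence proofs. Your direct point-chasing via $w\cdot\llbracket A\rrbracket\subseteq\llbracket B\rrbracket$, together with the minimal valuations (which need no persistence closure here), is that argument. Checking each inclusion and each of the six refuting valuations against the paper's first-coordinate convention, all of them go through as you describe.
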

\begin{proof}
    Routine exercise, paralleling the proofs from relevant logic.
\end{proof}
Moreover, by again mirroring proofs from relevant logic, one can use the right-hand side conditions to show that the left-hand side translated formulas are canonical. Consequently, each modal logic axiomatized by any subset of the listed translated formulas is complete with respect to the corresponding class of frames. 
This observation is helpful in extending Theorem~\ref{th:translationB+} to stronger relevant logics. In particular, we have the following, where we, given a set of relevant formulas $\Phi$, write $\Phi^\star\mathrel{:=}\{\alpha^\star\supset \beta^\star\mid \alpha\to\beta\in\Phi\}$.

\begin{theorem}\label{th:translation+}
    Let $\Phi$ be a subset of the formulas 7.-10. Then for all formulas $\varphi, \psi$ of the positive relevant language, 
\begin{align}
    \vdash_{\mathbf{B}^+\oplus\Phi}\varphi\to \psi\qquad \text{iff}\qquad  \vdash_{\mathbf{K}_2\oplus \Phi^\star}\varphi^\star\supset\psi^\star.\label{eq:famreltr}
\end{align}
\end{theorem}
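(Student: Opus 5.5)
The plan is to reuse the semantic strategy of Theorem~\ref{th:translationB+} verbatim, but relativized to frame classes. Let $\mathcal{C}_\Phi$ be the class of Routley--Meyer frames satisfying the first-order conditions that Proposition~\ref{pr:Relcorrespondence} assigns to the members of $\Phi$. Let $\mathbb{C}_\Phi$ be the class of Kripke frames satisfying the same conditions, read via Proposition~\ref{pr:Modcorrespondence}.

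\textbf{Step 1: completeness on both sides.} By the canonicity of 7.--10.\ noted after Proposition~\ref{pr:Relcorrespondence}, $\mathbf{B}^+\oplus\Phi$ is sound and complete with respect to $\mathcal{C}_\Phi$. By the canonicity remark following Proposition~\ref{pr:Modcorrespondence}, $\mathbf{K}_2\oplus\Phi^\star$ is sound and complete with respect to $\mathbb{C}_\Phi$. The handy fact~\eqref{eq:handyfact} holds for any frame, so both directions of~\eqref{eq:famreltr} reduce to transferring satisfaction between pointed models over $\mathcal{C}_\Phi$ and $\mathbb{C}_\Phi$.

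\textbf{Step 2: the two transfer arguments.} The satisfaction-preservation properties~\eqref{eq:chu} and~\eqref{eq:faithfulness} from Lemmas~\ref{Kripke-Routley} and~\ref{Routley-Kripke} hold for arbitrary models. It therefore suffices to show two things:
\begin{itemize}
\item if the Kripke frame underlying $\mathbb{M}$ lies in $\mathbb{C}_\Phi$, then the frame of $F(\mathbb{M},w)$ lies in $\mathcal{C}_\Phi$;
\item if the frame of $\mathfrak{M}$ lies in $\mathcal{C}_\Phi$, then the frame of $G(\mathfrak{M},x)$ lies in $\mathbb{C}_\Phi$.
\end{itemize}
Given these, the fullness and faithfulness arguments from the proof outline go through unchanged, now invoking soundness and completeness for the extended logics.

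\textbf{Step 3: the $G$ direction.} I expect this to be routine. $G$ should essentially keep the ternary relation $R$ of the Routley--Meyer frame and forget $N$ and persistence. The conditions for 7.--10.\ are universal statements about $\cdot$ alone, so they transfer directly.

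\textbf{Step 4: the $F$ direction (main obstacle).} To obtain a Routley--Meyer frame, $F$ must add normal points and a preorder $\leq$, closing $R$ under down-down-up. The delicate step is checking the conditions on the new triples, in particular those whose first coordinate is a normal point $n$, where $n\cdot y$ is the $\leq$-upset of $y$. I would proceed condition by condition:
\begin{itemize}
\item For 7., the instance $(n\cdot y)\cdot z\subseteq n\cdot(y\cdot z)$ follows from down-down-up, since elements of $(n\cdot y)\cdot z$ are $\geq$ elements of $y\cdot z$.
\item For 8., the instances $(x\cdot n)\cdot z\subseteq n\cdot(x\cdot z)$ and $(n\cdot y)\cdot z\subseteq y\cdot(n\cdot z)$ follow the same way, using monotonicity of $R$ in each coordinate.
\item For 9., I would check $x\cdot y\subseteq x\cdot(x\cdot y)$ with $x$ or $y$ normal, using that normal points form an upset.
\item For 10., $n\in n\cdot n$ reduces to reflexivity of $\leq$.
\item Triples not involving new points inherit the condition from the Kripke frame, after noting that the down-down-up closure preserves the inclusions.
\end{itemize}

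This is also where I expect the restriction to 7.--10.\ to be forced. For 12., the instance $n\cdot x\subseteq x\cdot n$ would demand that $x\cdot n$ contain the upset of $x$, which the Kripke frame need not provide. Condition 11.\ fails similarly. If a uniform argument for Step 4 proves awkward, the fallback is to modify $F$ only by adding the closure triples that the condition itself forces, and to re-verify~\eqref{eq:chu} for that modified $F$.
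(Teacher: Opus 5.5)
Your proposal follows the paper's proof: soundness and completeness on both sides reduce the theorem to showing that $G$, which keeps $R$ and forgets $N$, and $F$ preserve the respective frame classes, with only the $F$ direction requiring a check on triples involving the new point. One simplification: the $F$ of Lemma~\ref{Kripke-Routley} adds a single normal point $0$ with $\leq$ the identity, so no down-down-up closure is needed. Since $0\cdot y=\{y\}$, and $x\cdot 0=\emptyset$ and $x\cdot y\subseteq W$ for $x\in W$, every instance of 7.--10.\ involving $0$ holds trivially: both sides coincide, or the left-hand side is empty, or, for 10., $0\cdot 0=\{0\}$.
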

\begin{proof}
    It suffices to verify that the model transformations $F:\{(\mathbb{M},w)\} \rightleftarrows \{(\mathfrak{M},x)\}:G$ from Lemma~\ref{Kripke-Routley} and~\ref{Routley-Kripke} take Kripke frames for $\mathbf{K}_2\oplus\Phi^\star$ to Routley--Meyer frames for $\mathbf{B}^+\oplus\Phi$, and vice versa.
    
    The latter is immediate by the frame correspondents of Proposition~\ref{pr:Relcorrespondence} and~\ref{pr:Modcorrespondence}, as given a pointed Routley--Meyer model over a frame $\mathfrak{F}=(K, R, N)$, we simply considered the pointed Routley--Meyer model as a pointed Kripke model over the frame $(K, R)$. 

    Similarly, for the former, one needs to check that a Kripke frame $(W, R)$ is a $\mathbf{K}_2\oplus\Phi^\star$ frame only if the Routley--Meyer frame $(W\cup\{0\}, R', \{0\})$ of Lemma~\ref{Kripke-Routley} is a $\mathbf{B}^+\oplus\Phi$ frame. While not a direct consequence of Proposition~\ref{pr:Relcorrespondence} and~\ref{pr:Modcorrespondence}, it is still readily verified.
\end{proof}
Thus the $^\star$-translation not only ties together the basic relevant logic $\mathbf{B}^+$ with the basic modal logic $\mathbf{K}_2$, it also translates a greater family of relevant reasoning systems that includes $\mathbf{TW}^+, \mathbf{TWJ}^+$, and $\mathbf{C}^+$. However, as we shall see, it fails to meet the translation criterion of~\eqref{eq:famreltr} for other sets of relevant axioms $\Phi$; notably, Theorem~\ref{th:translation+} states $\Phi\subseteq \{7., \hdots, 10.\}$, excluding 11. and 12. (contraction and assertion). We will get back to this below, but let us first ameliorate another shortcoming of the translation as so-far proposed: it only translates formulas with main connective $\to$ (cf.~\eqref{basictr}). This will be particularly instructive in elucidating a subtlety in how relevant implications behave differently when nested.

\subsection{Nested implication, nested translation}
To extend Theorem~\ref{th:translationB+} to formulas with main connectives other than $\to$, we define a second-layer translation $^\ast$, using the abbreviated modal operator $\Box_\to\varphi\mathrel{:=}\top \to \varphi$.
\begin{align*}
    p^\ast &&\mathrel{:=}&& &p   && && &&  (\varphi\lor \psi)^\ast&&\mathrel{:=}&& \varphi^\ast \lor \psi^\ast\\
    (\varphi\to \psi)^\ast&&\mathrel{:=}&& &\Box_\to (\varphi^\star\supset\psi^\star)  && && &&  (\varphi\land \psi)^\ast&&\mathrel{:=}&& \varphi^\ast\land \psi^\ast.
\end{align*}
As we now prove, the $^\ast$-translation is full and faithful for $\mathbf{B}^+$ and $\mathbf{K}_2$, thereby tightening the connection between the two: $\mathbf{B}^+$ is realized as a syntactic fragment of $\mathbf{K}_2$.
\begin{theorem}\label{th:B+bullet}
    For all formulas $\varphi$ of the positive relevant language, 
\begin{align*}
    \vdash_{\mathbf{B}^+}\varphi\qquad \text{iff}\qquad \vdash_{\mathbf{K}_2}\varphi^\ast.
\end{align*}
\end{theorem}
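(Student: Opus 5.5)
I would reduce the statement to Theorem~\ref{th:translationB+} via the normal-point structure. The guiding observation is that $\Box_\to\chi = \neg\Diamond(\top,\neg\chi)$ holds at $w$ iff $\chi$ holds at every $u$ with $Rwvu$ for some $v$. This mirrors how $\mathbf{B}^+$-validity of an implication $\varphi\to\psi$ amounts to truth preservation at all points, per \eqref{eq:handyfact}. Since both sides of the theorem respect the Boolean structure of $\land,\lor$ only partially (theoremhood does not distribute over $\lor$), I would argue semantically rather than by induction on theoremhood, with one semantic lemma for each direction.

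\emph{Fullness} ($\vdash_{\mathbf{B}^+}\varphi$ implies $\vdash_{\mathbf{K}_2}\varphi^\ast$). I would use the transformation $F$ of Lemma~\ref{Kripke-Routley}, which (as the proof of Theorem~\ref{th:translation+} reveals) sends a Kripke model $(W,R,V)$ to the Routley--Meyer model on $W\cup\{0\}$ with normal point $0$. The natural choice is that $F(\mathbb{M},w)$ is the pointed model at the normal point $0$, with $R'$ arranged so that $R'0vu$ relates to $Rwvu$ and $0$ evaluates atoms like $w$. I would prove by induction on $\alpha$ that
\[
(\mathfrak{M}',0)\Vdash\alpha \quad\text{iff}\quad \mathbb{M},w\vDash\alpha^\ast .
\]
The atomic, $\land$ and $\lor$ cases are immediate. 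For $\alpha=\varphi\to\psi$, the condition $0\Vdash\varphi\to\psi$ unfolds to the following: for all $u$ reachable from $0$ (i.e.\ $0\leq u$), if $u\Vdash\varphi$ then $u\Vdash\psi$, using the (down-down-up) and preorder conditions. By \eqref{eq:chu} applied at $u$, this becomes $u\vDash\varphi^\star\supset\psi^\star$ for all $R$-successors $u$ of $w$, which is precisely $w\vDash\Box_\to(\varphi^\star\supset\psi^\star)$. Then soundness of $\mathbf{B}^+$ at normal points and completeness of $\mathbf{K}_2$ finish this direction. The delicate part is checking that the specific definition of $R'$ on $0$ in Lemma~\ref{Kripke-Routley} yields exactly ``$0\leq u$ iff $u$ is a $\Box_\to$-successor of $w$ (or $u=0$)''. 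If instead $0$ is a reflexive hub that sees all of $W$, I would generate a submodel from $w$ first.

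\emph{Faithfulness} ($\vdash_{\mathbf{K}_2}\varphi^\ast$ implies $\vdash_{\mathbf{B}^+}\varphi$). Given a Routley--Meyer model $\mathfrak{M}=(K,R,N,V)$ and a normal point $n$, I would take the Kripke model $(K,R,V)$ as in $G$ of Lemma~\ref{Routley-Kripke} and show by induction that $n\Vdash\alpha$ iff $n\vDash\alpha^\ast$. In the implication case, $n\Vdash\varphi\to\psi$ iff for all $y,z$ with $Rnyz$ we have that $y\Vdash\varphi$ implies $z\Vdash\psi$. Since $Rnyz$ means $y\leq z$, persistence makes this equivalent to: for all $z$ with $n\leq z$, $z\Vdash\varphi$ implies $z\Vdash\psi$. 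By \eqref{eq:faithfulness}, this is in turn equivalent to $\varphi^\star\supset\psi^\star$ holding at every $z$ with $Rnyz$ for some $y$, i.e.\ to $n\vDash\Box_\to(\varphi^\star\supset\psi^\star)$. Soundness of $\mathbf{K}_2$ and completeness of $\mathbf{B}^+$ with respect to normal points then give the claim.

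\emph{Main obstacle.} I expect the main difficulty in the fullness direction. There I must verify that the $F$-construction makes the normal point see exactly the $R$-successors of $w$ as its $\leq$-upset, including reflexivity ($0\leq 0$). Otherwise I would need a modified transformation $F'$ that places $w$ itself as the normal point, which requires first passing to a suitable unravelling so that $R'$ satisfies (down-down-up) while leaving $\Box_\to$-truths unchanged.
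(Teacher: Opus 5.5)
Your overall architecture matches the paper's: $F$ and $G$ from Lemmas~\ref{Kripke-Routley} and~\ref{Routley-Kripke}, evaluated at a normal point, with the $^\star$-lemmas handling nested conditionals. But both of your inductions assert a biconditional that is false, and in the fullness direction this breaks the argument. In the paper's $F$, $R'0yz$ holds iff $y=z\in W\cup\{0\}$, whereas $0\leq u$ holds iff $u=0$. So $0\Vdash\varphi\to\psi$ says that every $z\in W\cup\{0\}$ satisfying $\varphi$ satisfies $\psi$; it says nothing about the points $\leq$-above $0$. The ``delicate'' property you plan to verify is therefore simply false for $F$.

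Your alternatives do not help. Letting $0$ copy $w$'s atoms while $0\leq u$ for all $R$-successors $u$ of $w$ violates persistence for an arbitrary Kripke valuation. So does letting $R'0vu$ mirror $Rwvu$, since that forces $v\leq u$. Generating a submodel from $w$ still lets $0$ see $w$, itself, and points at distance $\geq 2$. Concretely, take $W=\{w\}$, $R=\varnothing$, $V(p)=\{w\}$, $V(q)=\varnothing$. Then $w\vDash(p\to q)^\ast$ vacuously, but $0\nVdash p\to q$ since $R'0ww$. In fact no biconditional is possible if the designated normal point is the only normal point: reflexivity of $\leq$ then forces $R'000$, i.e.\ pseudo-modus ponens at $0$, which this $w$ refutes.

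The missing observation is that each direction of the theorem needs only one half. That is exactly what Lemmas~\ref{Kripke-RoutleyBullet} and~\ref{Routley-KripkeBullet} prove, and the paper's footnote stresses that neither is a biimplication. For fullness, show $0\Vdash\alpha\Rightarrow x\vDash\alpha^\ast$ for every $x\in W$:
\begin{itemize}
\item the atomic case is vacuous because $0$ satisfies no atoms;
\item the $\land,\lor$ cases go through because these connectives are monotone;
\item for $\to$, weaken ``preserved at all $z\in W$'' (via Lemma~\ref{Kripke-Routley}) to ``preserved at the $R$-successors of $x$'', which is $x\vDash\Box_\to(\beta^\star\supset\gamma^\star)$.
\end{itemize}
Your ``main obstacle'' then disappears, since $0$ need not see exactly the successors of $w$.

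Your faithfulness chain becomes the paper's one-way implication $n\vDash\alpha^\ast\Rightarrow n\Vdash\alpha$ once you replace ``$n\leq z$'' by ``$Rnyz$ for some $y$'' and ``equivalent'' by ``implied by''. Persistence only turns the diagonal condition into the pair condition. The converse genuinely fails: take $N=\{m,n\}$, $R=\{(m,n,n),(m,m,m),(m,a,a),(m,b,b),(m,a,b),(n,a,b)\}$, $V(p)=\{b\}$, $V(q)=\varnothing$. Then $n\Vdash p\to q$ vacuously, but $n\nvDash(p\to q)^\ast$ because of $Rnab$.
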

\begin{proof}
    The proof is much the same as that of Theorem~\ref{th:translationB+}, relying on $F:\{(\mathbb{M},w)\} \rightleftarrows \{(\mathfrak{M},x)\}:G$ from Lemma~\ref{Kripke-Routley} and~\ref{Routley-Kripke}. However, now, we also think of $F$ (or call it $F^\ast$) as taking a pointed Kripke model $(\mathbb{M},w)$ and returning a Routley--Meyer model $(\mathfrak{M},n)$ \textit{pointed at a normal point} $n$ such that
    \begin{align}
    F^\ast(\mathbb{M},w)&\Vdash \alpha\qquad \text{only if}\qquad \mathbb{M},w\vDash \alpha^\ast.\label{eq:chu2}
\end{align} 
    The details and proof are found in Lemma~\ref{Kripke-RoutleyBullet}.

    Conversely, in Lemma~\ref{Routley-KripkeBullet}, we show that when $G$ is applied to a Routley--Meyer model $(\mathfrak{M},n)$ pointed at a normal point $n$, then
\begin{align}
    \mathfrak{M},n&\Vdash \alpha\qquad \text{\phantom{on }if\phantom{ly}}\qquad G(\mathfrak{M},n)\vDash \alpha^\ast.\label{eq:faithfulness2}
\end{align}    
\eqref{eq:chu2} yields fullness of $^\ast$ and~\eqref{eq:faithfulness2} yields faithfulness, so combined they establish the theorem.\footnote{We footnote that~\eqref{eq:chu2} isn't a biimplication (hence not a Chu transform/infomorphism), nor is~\eqref{eq:faithfulness2}. But neither biimplication is needed. 
}
\end{proof}
The translation makes precise a sense in which relevant logic draws a qualitative distinction between conditionals within the scope of another conditional (where the $^\star$-translation is in force) and those outside it (where $^\ast$ is in force). Note that the latter resemble intuitionistic conditionals -- which perhaps becomes particularly evident upon recalling the Gödel--McKinsey--Tarski translation of $\mathbf{IPL}$ into the modal logic $\mathbf{S4}$ 
-- and both traditions invalidate $(p\to q)\lor (q\to p)$ in much the same manner.\footnote{In addition to the links between constructivism and relevance drawn in~\cite{WeissStandefer2026}, we point the reader to~\cite{Meyer73} for a translational embedding of $\mathbf{IPL}$ into $\mathbf{R}$, and to~\cite{DunnMeyer1971} for an embedding of the Gödel--Dummett intermediate logic into the relevant logic $\mathbf{RM}$.
}

As with the $^\star$-translation, $^\ast$ can be extrapolated along the lines of Theorem~\ref{th:translation+}:
\begin{corollary}\label{cor:translation+Bullet}
    Let $\Phi$ be a subset of the formulas 7.-10. Then for all formulas $\varphi$ of the positive relevant language, \vspace{-.32cm}
\begin{align*}
    \vdash_{\mathbf{B}^+\oplus\Phi}\varphi\qquad \text{iff}\qquad  \vdash_{\mathbf{K}_2\oplus \Phi^\star}\varphi^\ast.
\end{align*}
\end{corollary}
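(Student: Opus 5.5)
The plan is to combine the proof of Theorem~\ref{th:B+bullet} with the frame-class bookkeeping from the proof of Theorem~\ref{th:translation+}. We use the same model transformations $F^\ast$ (Lemma~\ref{Kripke-RoutleyBullet}) and $G$ (Lemma~\ref{Routley-KripkeBullet}). What remains is to check that they respect the relevant frame classes, and then to invoke soundness and completeness on both sides. On the relevant side, completeness of $\mathbf{B}^+\oplus\Phi$ with respect to the class of Routley--Meyer frames satisfying the conditions of Proposition~\ref{pr:Relcorrespondence} for $\Phi$ is given. On the modal side, completeness of $\mathbf{K}_2\oplus\Phi^\star$ with respect to the Kripke frames satisfying the matching conditions of Proposition~\ref{pr:Modcorrespondence} follows from the canonicity remark.

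For fullness (left to right), suppose $\vdash_{\mathbf{B}^+\oplus\Phi}\varphi$, and let $(\mathbb{M},w)$ be a pointed Kripke model over a $\mathbf{K}_2\oplus\Phi^\star$ frame. As in the proof of Theorem~\ref{th:translation+}, the Routley--Meyer frame $(W\cup\{0\},R',\{0\})$ underlying $F^\ast(\mathbb{M},w)$ is a $\mathbf{B}^+\oplus\Phi$ frame. This fact was already verified there, since it concerns only the frame and not the distinguished point. Soundness then gives $F^\ast(\mathbb{M},w)\Vdash\varphi$, because the output is pointed at a normal point and validity is evaluated at normal points. By \eqref{eq:chu2} we get $\mathbb{M},w\vDash\varphi^\ast$, and completeness of $\mathbf{K}_2\oplus\Phi^\star$ yields $\vdash_{\mathbf{K}_2\oplus\Phi^\star}\varphi^\ast$.

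For faithfulness (right to left), suppose $\vdash_{\mathbf{K}_2\oplus\Phi^\star}\varphi^\ast$, and let $\mathfrak{M}$ be a model over a $\mathbf{B}^+\oplus\Phi$ frame $(K,R,N)$ with $n\in N$. The model $G(\mathfrak{M},n)$ lives over $(K,R)$, which satisfies the conditions of Proposition~\ref{pr:Modcorrespondence} because the first-order conditions in Propositions~\ref{pr:Relcorrespondence} and~\ref{pr:Modcorrespondence} are literally identical. So $(K,R)$ is a $\mathbf{K}_2\oplus\Phi^\star$ frame, and soundness gives $G(\mathfrak{M},n)\vDash\varphi^\ast$. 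By \eqref{eq:faithfulness2} we get $\mathfrak{M},n\Vdash\varphi$, and completeness of $\mathbf{B}^+\oplus\Phi$ finishes the argument.

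The only potential obstacle is confirming that nothing in $F^\ast$ or $G$ beyond the underlying frames of $F$ and $G$ is used. That is, $F^\ast$ must differ from $F$ only in the choice of distinguished point (the normal point $0$), and $G$ must leave the frame untouched. If $F^\ast$ instead altered the frame, I would re-verify the conditions for 7.--10. directly on $R'$, case by case. These are the same routine checks as in Theorem~\ref{th:translation+}, noting how triples involving $0$ are defined, in the style of the verification of \emph{down-down-up}.
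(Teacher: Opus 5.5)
Your proposal is correct and takes essentially the paper's approach: the paper's proof is just the remark that the corollary is immediate from the proofs of Theorem~\ref{th:translation+} and Theorem~\ref{th:B+bullet}. Spelled out, that means passing $F^\ast$ and $G$ through the frame-class preservation step and then soundness and completeness on both sides, exactly as you do. Your closing hedge is unnecessary, since Lemma~\ref{Kripke-RoutleyBullet} uses the very same frame $(W\cup\{0\},R',\{0\})$ as Lemma~\ref{Kripke-Routley}, and on that frame the conditions for 7.--10.\ are easily checked because $R'0yu$ forces $u=y$.
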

\begin{proof}
    Immediate from the proofs of Theorem~\ref{th:translation+} and~\ref{th:B+bullet}.
\end{proof}
Of course, one might here prefer to replace $\Phi^\star$ with $\Phi^\ast\mathrel{:=}\{\alpha^\ast\mid \alpha\in \Phi\}$. The issue then is that the correspondences of Proposition~\ref{pr:Modcorrespondence} fail for 7.$^\ast$-10.$^\ast$ There are ways to get around this hurdle, most easily by swapping $\Box_\to$ for the global box of modal logic in the $^\ast$-translation. 
Rather than detailing this here, we will set out and examine yet another translation. This is in part due to the nature of this paper -- being exploratory and broad rather than deep -- and in part due to some limitations of the translations proposed so far, as we now turn to discuss.

\subsection{Lost and found in translation}
\paragraph{On translating the relevant family.} Theorem~\ref{th:translation+} (and Corollary~\ref{cor:translation+Bullet}) collect a few relevant logics -- like $\mathbf{TW}^+, \mathbf{TWJ}^+$, and $\mathbf{C}^+$ -- for which our translation naturally extends, in the sense of~\eqref{eq:famreltr}. However, many more relevant logics lie outside the scope of Theorem~\ref{th:translation+}. For good reason: the theorem need not apply to them. For example, over $\mathbf{B}^+$, contraction (11.) derives pseudo-modus ponens (10.), but the corresponding derivation over $\mathbf{K}_2$ fails for their translations:
\begin{align*}
    \vdash_{\mathbf{B}^+\oplus[p \to(p \to q )]\to(p \to q )}[p \land(p \to q )]\to q\qquad \text{but}\qquad  \nvdash_{\mathbf{K}_2\oplus [p \to(p \to q )]^\star\supset(p \to q )^\star}[p \land(p \to q )]^\star\supset q^\star,
\end{align*}
    as witnessed by, e.g., $x\nvDash[p \land(p \to q )]^\star\supset q^\star$ in the $\mathbf{K}_2\oplus [p \to(p \to q )]^\star\supset(p \to q )^\star$-model with $W=\{0,x,y\}$, $V(p)=\{x,y\}, V(q)=\varnothing$, and
    \(
        R=\{(0,0,0),(0,x,x),(0,y,y),(0,x,y),(y,x,x),(y,x,y),(y,y,y)\}.
    \)
    
    Model-theoretically, the issue is that the simpler pairs $(W,R)$ of Kripke modeling lack a means to account for relevant inferences relying on the $\leq$-conditions of Routley--Meyer frames. Specifically, although the relevant frame correspondent of $[p \to(p \to q )]\to(p \to q )$ and the modal frame correspondent of $[p \to(p \to q )]^\star\supset(p \to q )^\star$ both are $x\cdot y\subseteq (x\cdot y)\cdot y$, \textit{and} the relevant frame correspondent of $[p \to(p \to q )]\to(p \to q )$ and the modal frame correspondent of $[p \to(p \to q )]^\star\supset(p \to q )^\star$ likewise both are $x\in x\cdot x$, only for Routley--Meyer frames does the former correspondent imply the latter, as it crucially relies on the down-down-up condition (recall Definition~\ref{def:basicframes}). This is exemplified by the above Kripke model, which, if we follow the suggestive notation and let $N=\{0\}$, fails to define a Routley--Meyer frame as we then have $x\leq y$ and $Ryxx$ yet $\overline{R}xxx$.\footnote{Proof-theoretically, an issue has to do with the translation of the rules. To illustrate, for implications $\varphi=\alpha\to\beta, \psi=\gamma\to\delta$ the translated rule of modus ponens reads 
    \begin{prooftree}
\AxiomC{$\alpha^\star \supset \beta^\star$}
\AxiomC{$(\alpha^\star\to\beta^\star)\supset (\gamma^\star\to\delta^\star)$}
\RightLabel{,}
\BinaryInfC{$\gamma^\star\supset \delta^\star$}
\end{prooftree}
which should be admissible in the target logic; however, as the translation highlights, there is a compositional tension with nested versus non-nested conditionals. 
}
    
    Similarly, for assertion (12.) and one of the characteristic axioms of $\mathbf{E}^+$, we have
    \begin{align*}
    \vdash_{\mathbf{B}^+\oplus p\to [(p\to q)\to q]} [(p\to p)\to q]\to q\qquad \text{but}\qquad  \nvdash_{\mathbf{K}_2\oplus p^\star\supset [(p\to q)\to q]^\star}[(p\to p)\to q]^\star\supset q^\star,
\end{align*}
    where the latter can be readily witnessed by a Kripke frame with $R=\varnothing$; here, the Kripke modeling is inable to account for relevant inferences validated via reflexivity of $\leq$ in Routley--Meyer frames.
    %
    
    To make matters more pressing, take $\mathbf{R}^+$. On the one hand, recall that $\mathbf{R}^+=\mathbf{B}^+\oplus\{8.,11.,12.\}$ and that $\vdash_{\mathbf{R}^+} p \land(p \to q )\to q$. On the other, we have $\nvdash_{\mathbf{K}_2\oplus \{8., 11., 12.\}^\star} [p \land(p \to q )]^\star\supset q^\star$ -- so by completeness, not even the frame-class for $\{8., 11., 12.\}^\star$ validates $[p \land(p \to q )]^\star\supset q^\star$ -- which too can be shown by considering a Kripke frame with $R=\varnothing$. Regrettably, the translation falls short of $\mathbf{R}^+$, and for a deeper structural relation between relevant logic and modal logic, we should, and will, look elsewhere.
    \paragraph{Import, export, transport.} It is a staple of logical practice to transfer meta-logical properties along translations. 
    We shall here focus on but two: computability and interpolation. 
    
    First, we note that, e.g., $\mathbf{B}^+$ inherits decidability as well as the finite-model property from $\mathbf{K}_2$ through translation.\footnote{One may think that only the $^\ast$-translation imparts decidability on $\mathbf{B}^+$, as $^\star$ operates on formulas with implication as main connective. However, $\mathbf{B}^+$, like most other positive relevant logics, enjoys the disjunction property, hence the problem of deciding $\vdash_{\mathbf{B}^+}\varphi$ reduces to the problem of deciding $\vdash_{\mathbf{B}^+}\alpha\to\beta$. Proofs of the disjunction property for relevant logics typically go via \textit{metacompleteness} \cite{Meyer76, Slaney84}.} But more interesting, and promising, seem the transfer of undecidability. In seminal work, Urquhart~\cite{urquhart84:jsl} proved that a great many relevant logics are undecidable, not least $\mathbf{R}^+$. While the $^\star$-translation fell short of $\mathbf{R}^+$, there may be, and still are, results to be had. First off, all we have shown is that $^\star$ isn't full for $\mathbf{R}^+$ and $\mathbf{K}_2\oplus \{8., 11., 12.\}^\star$. This still leaves considerable room for variation; for example, one may append to $\{8., 11., 12.\}^\star$ one or more axioms, and thereby have the undecidability of $\mathbf{R}^+$ transfer. This remains to be explored.\footnote{One place to look for inspiration is in the work on classical relevant logic and simplified semantics~\cite{MeyerRoutley73,PriestSylvan92, Restall93, RestallRoy09}. Both bear some similarity to the present enterprise, inasmuch as translations also produce semantics -- fullness as soundness, faithfulness as completeness.} Even if this were to fail, we can, with no further work, collect other undecidability results. The result of~\cite{urquhart84:jsl} applies to $\mathbf{C}^+$ as well; and in recent work, undecidability is also shown for the weaker $\mathbf{TWJ}^+$~\cite{Knudstorp25b}. Since the proof of~\cite{Knudstorp25b} is for formulas whose principal connective is $\to$, we obtain the following theorem.
    \begin{theorem}
        $\mathbf{K}_2\oplus \{7.,8., 9.\}^\star$ and $\mathbf{K}_2\oplus \{7.,8., 10.\}^\star$ are undecidable. That is, the modal logic of the class of frames where
        \[
        (x\cdot y)\cdot z \subseteq x\cdot (y\cdot z),\qquad (x\cdot y)\cdot z \subseteq y\cdot (x\cdot z), \qquad x\cdot y \subseteq x\cdot (x\cdot y)
    \]
    is undecidable; and so is the modal logic of frames where
        \[
        (x\cdot y)\cdot z \subseteq x\cdot (y\cdot z),\qquad (x\cdot y)\cdot z \subseteq y\cdot (x\cdot z), \qquad x \in x\cdot x.
    \]
    \end{theorem}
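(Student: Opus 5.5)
The plan is a straightforward reduction along the translation of Theorem~\ref{th:translation+}. Take $\Phi=\{7.,8.,9.\}$, so that $\mathbf{B}^+\oplus\Phi=\mathbf{TWJ}^+$. Theorem~\ref{th:translation+} then gives, for all positive relevant formulas $\varphi,\psi$,
\[
\vdash_{\mathbf{TWJ}^+}\varphi\to\psi\quad\text{iff}\quad\vdash_{\mathbf{K}_2\oplus\{7.,8.,9.\}^\star}\varphi^\star\supset\psi^\star .
\]
The map $\varphi\to\psi\mapsto\varphi^\star\supset\psi^\star$ is clearly computable; it is a homomorphic rewriting of linear size. Hence a decision procedure for $\mathbf{K}_2\oplus\{7.,8.,9.\}^\star$ would decide the set of $\mathbf{TWJ}^+$-theorems with principal connective $\to$. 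The undecidability proof of~\cite{Knudstorp25b} is carried out for formulas of exactly this shape, so that set is undecidable, and therefore so is $\mathbf{K}_2\oplus\{7.,8.,9.\}^\star$.

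For the second logic, take $\Phi=\{7.,8.,10.\}$, so that $\mathbf{B}^+\oplus\Phi=\mathbf{C}^+$, and run the same argument using Urquhart's undecidability result~\cite{urquhart84:jsl} for $\mathbf{C}^+$. Here one point needs checking: that the undecidability there is already witnessed by implicational-principal formulas. I would settle this in one of two ways. The first is to inspect Urquhart's reduction, where the undecidable problem is whether a certain implication (a conjunction of encoded relations implying a target) is a theorem. The second is a general fallback: use the disjunction property of $\mathbf{C}^+$ (metacompleteness, as noted in the footnote) to reduce arbitrary $\vdash_{\mathbf{C}^+}\varphi$ to finitely many implication-theoremhood questions. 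In fact a simpler computable reduction also works: $\vdash\varphi$ iff $\vdash(p_0\to p_0)\to\varphi$ for a fresh variable $p_0$, at least in logics where $\mathsf{t}$-like behaviour of $p_0\to p_0$ is available. Since that equivalence uses assertion-type principles, I would prefer the first two routes.

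Finally, I would justify the "that is" clause, which identifies these logics with the modal logics of the displayed frame classes. By Proposition~\ref{pr:Modcorrespondence}, the translated axioms correspond to the listed first-order conditions. By the remark after it, the translated formulas are canonical, so $\mathbf{K}_2\oplus\Phi^\star$ is sound and complete for the corresponding class of frames. The logic of the frame class therefore coincides with $\mathbf{K}_2\oplus\Phi^\star$ and is undecidable.

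The main obstacle is confirming that the cited undecidability results hold for the restricted set of $\to$-principal theorems, particularly for $\mathbf{C}^+$. The translation step itself is routine.
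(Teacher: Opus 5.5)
Your proposal is correct and follows the paper's own argument. You transfer the cited undecidability of the $\to$-principal theorems of $\mathbf{TWJ}^+$~\cite{Knudstorp25b} and $\mathbf{C}^+$~\cite{urquhart84:jsl} along the translation of Theorem~\ref{th:translation+}, and the frame-class reading comes from Proposition~\ref{pr:Modcorrespondence} and canonicity.

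Your disjunction-property fallback is sound: it is the same reduction the paper's footnote uses for decidability, and it would discharge the $\to$-principal caveat for $\mathbf{TWJ}^+$ too. By contrast, drop the fresh-variable trick entirely. By variable sharing, $(p_0\to p_0)\to\varphi$ with $p_0$ fresh is never a theorem of any sublogic of $\mathbf{R}^+$, so it fails regardless of assertion-type principles.
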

    The theorem supplements related results from~\cite{Kurucz93, KuruczNSS95:jolli, Knudstorp25a} by, in comparison, pertaining to some sort of rotated setting yet without full associativity and using only a fragment of the language.\footnote{More generally, the undecidability techniques of~\cite{Knudstorp25a, Knudstorp25b} have found somewhat broad application~\cite{Knudstorp2024, GalatosJipsenKnudstorpRevantha:BIund}, and comparing these results via translation could serve in mapping the border between the solvable and the unsolvable.}
        
    Next, say that a logic $\mathbf{L}$ has the Craig interpolation property (CIP) for a binary connective $\rightsquigarrow$ if whenever \mbox{$\vdash_{\mathbf{L}}\varphi\rightsquigarrow \psi$}, there is a formula $\theta$ (an interpolant) such that $\vdash_{\mathbf{L}}\varphi\rightsquigarrow \theta$ and $\vdash_{\mathbf{L}}\theta\rightsquigarrow \psi$ and $\mathsf{Var}(\theta)\subseteq \mathsf{Var}(\varphi)\cap \mathsf{Var}(\psi)$, where $\mathsf{Var}(\alpha)$ denotes the set of propositional variables occurring in $\alpha$. For normal modal logics, and other classical systems, the connective of concern tends to be the material implication $\supset$, and for relevant logics, the relevant implication $\to$. 
    
    Intriguingly, although all relevant logics between $\mathbf{C}^+$ and $\mathbf{R}^+$ were shown to lack interpolation already in the 1990s~\cite{Urquhart93}, it remains open whether $\mathbf{B}^+$ enjoys it, cf.~\cite{Standefer26}. In contrast, on the one hand, $\mathbf{K}_2$ is known to have the CIP, while on the other, if the diamond of $\mathbf{K}_2$ takes only diagonal entries $\Diamond(\varphi, \varphi)$, one obtains a weakly aggregative modal logic (WAML) which lacks the CIP~\cite{LiuDingWang22}. Since our $^\star$-translation sends formulas with main connective $\to$ to formulas with main connective $\supset$, this raises a number of questions. Can the proof of the CIP for $\mathbf{K}_2$ be adapted to work for $\mathbf{B}^+$? Or can the argument showing that WAML lacks the CIP inspire a corresponding failure for $\mathbf{B}^+$? In case of the latter, since the translation establishes that $\mathbf{B}^+$-validities $\varphi\to\psi$ find interpolants within $\mathbf{K}_2$, what additional vocabulary would need to be added to $\mathbf{B}^+$ to obtain the CIP?

\section{Translating beyond the basics}
\subsection{Truth \texorpdfstring{$\mathsf{t}$}{t} and fusion \texorpdfstring{$\circ$}{∘}}
Sometimes relevant languages include the \textit{Ackermann truth constant} $\mathsf{t}$ and \textit{fusion} $\circ$, residuated by $\to$. We should like to translate these too.

Adding $\mathsf{t}$ and $\circ$, we denote by $\mathcal{L}^+_{\circ, \mathsf{t}}$ the relevant language with the grammar
\[
    \varphi\mathrel{::=} p \hspace{0.1cm}|\hspace{0.1cm} \mathsf{t}\hspace{0.1cm}|\hspace{0.1cm}\varphi\lor\varphi\hspace{0.1cm}|\hspace{0.1cm}\varphi\land\varphi\hspace{0.1cm}|\hspace{0.1cm} \varphi\to \varphi\hspace{0.1cm}|\hspace{0.1cm} \varphi\circ\varphi.
\]
The $\mathcal{L}^+_{\circ, \mathsf{t}}$-fragment of the basic relevant logic is axiomatized by adding to the axiomatization of section~\ref{subsec:pre}, the formula $\mathsf{t}$ as an axiom and the rules

\begin{center}
\begin{minipage}{0.3\textwidth}
\begin{prooftree}
\AxiomC{$\varphi\to(\psi\to\chi)$}
\RightLabel{}
\UnaryInfC{$(\varphi\circ \psi)\to\chi$}
\end{prooftree}
\end{minipage}
\begin{minipage}{0.3\textwidth}
\begin{prooftree}
\AxiomC{$(\varphi\circ \psi)\to\chi$}
\RightLabel{}
\UnaryInfC{$\varphi\to(\psi\to\chi)$}
\end{prooftree}
\end{minipage}
\begin{minipage}{0.3\textwidth}
\begin{prooftree}
\AxiomC{$\varphi$}
\UnaryInfC{$\mathsf{t}\to \varphi$}
\end{prooftree}
\end{minipage}
\end{center}
Formulas of the extended language are still interpreted on the Routley--Meyer frames and models of Definition~\ref{def:basicframes}; the clauses for $\mathsf{t}$ and $\circ$ are as in the later presented~\eqref{eq:tclause} and~\eqref{eq:circclause}. The $\mathcal{L}^+_{\circ, \mathsf{t}}$-validities on the class of all Routley--Meyer frames are precisely the $\mathcal{L}^+_{\circ, \mathsf{t}}$-theorems of the basic relevant logic.

To translate the augmented language, we augment the modal language accordingly:
\[
    \varphi\mathrel{::=} \bot\hspace{0.1cm}|\hspace{0.1cm} \top\hspace{0.1cm}|\hspace{0.1cm} p\hspace{0.1cm}|\hspace{0.1cm} \mathsf{t} \hspace{0.1cm}| \hspace{0.1cm} \neg\varphi\hspace{0.1cm}|\hspace{0.1cm}\varphi\lor\varphi\hspace{0.1cm}|\hspace{0.1cm}\Diamond(\varphi,\varphi)\hspace{0.1cm}|\hspace{0.1cm} \varphi\circ\varphi,
\]
where $\mathsf{t}$ is a nullary modality and $\circ$ is a binary modality.\footnote{Note that nullary modalities are normal modalities in the technical sense, and are used in e.g. arrow logic.\\
Also note that we use infix notation for $\circ$ and prefix for $\Diamond$. This is perhaps somewhat odd, but it's just a matter of convention (aligning partly with relevant conventions, and partly with modal conventions).}

With two binary modalities and one nullary modality, Kripke frames are now tuples $(W, R, S, N)$ where $N\subseteq W$ is a set -- as $\mathsf{t}$ is nullary -- and $S\subseteq W^3$ is another ternary relation; however, like in temporal logic, triples $(W, R, N)$ will suffice, as $\circ$ is residuated by $\to$, which is ensured by requiring the adjunction 
\[
    (\varphi\circ\psi)\supset \chi \text{ is a theorem}\qquad \text{iff} \qquad \varphi\supset (\psi \to\chi)\text{ is a theorem,}
\]
or, equivalently, by including the following two formulas as axioms:
\begin{align}    
    [(p\to q)\circ p]\supset q\qquad \text{and}\qquad  p\supset [q\to (p\circ q)].\label{eq:residuation}
\end{align}
The clauses for $\mathsf{t}$ and $\circ$ on Kripke models $(W, R, N)$ are then just as on Routley--Meyer models:
\begin{align}
        &u\vDash \mathsf{t} && \text{iff} && u\in N,\label{eq:tclause}\\
        &u\vDash \varphi\circ \psi && \text{iff} && \text{there exist $w,v$ such that $Rwvu$, } w\vDash \varphi,\text{ and } v\vDash \psi.\label{eq:circclause}
\end{align}
A pleasant benefit of translating with $\mathsf{t}$ and $\circ$ is that now the modal language can define Routley--Meyer frames via correspondence theory. (For a frame $(W, R, N)$, we write $x\leq y$ iff $\exists n\in N(Rnxy)$.)
\begin{proposition}[Modal correspondences]\label{pr:ModcorrespondenceII}
    Let $\mathbb{F}=(W,R, N)$ be a Kripke frame. Then
    \begin{alignat*}{3}
        \mathbb{F}&\vDash p\supset \mathsf{t}\circ p &\qquad \text{iff} &\qquad &\leq \text{ is reflexive},\\
        \mathbb{F}&\vDash \mathsf{t}\circ (\mathsf{t}\circ p)\supset \mathsf{t}\circ p &\qquad \text{iff} &\qquad &\leq \text{ is transitive},\\
        \mathbb{F}&\vDash \mathsf{t}\circ \mathsf{t}\supset \mathsf{t} &\qquad \text{iff} &\qquad &\text{$N$ is an $\leq$-upset},\\
        \mathbb{F}&\vDash (\mathsf{t}\circ p)\circ q\supset p\circ q  &\qquad \text{iff} &\qquad &\text{ $Rxyz, x^-\leq x \Rightarrow Rx^-yz$},\\
        \mathbb{F}&\vDash p\circ (\mathsf{t}\circ q)\supset p\circ q  &\qquad \text{iff} &\qquad &\text{ $Rxyz, y^-\leq y \Rightarrow Rxy^-z$},\\
        \mathbb{F}&\vDash \mathsf{t}\circ (p\circ q)\supset p\circ q  &\qquad \text{iff} &\qquad &\text{ $Rxyz, z\leq z^+ \Rightarrow Rxyz^+$}.
    \end{alignat*}
    In sum, a Kripke frame $\mathbb{F}=(W,R, N)$ is a Routley--Meyer frame iff it validates the listed axioms.
\end{proposition}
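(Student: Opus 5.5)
Each of the six equivalences is a correspondence between a Sahlqvist-type formula, built only from the diamond-like modalities $\circ$ and $\mathsf{t}$, and a first-order condition. I will verify each one separately by the standard two-way argument, which is minimal-valuation reasoning for the left-to-right direction. Unfolding the clauses~\eqref{eq:tclause} and~\eqref{eq:circclause} gives
\[
u\vDash \mathsf{t}\circ\varphi \quad\text{iff}\quad \text{there exist } n\in N \text{ and } v \text{ with } Rnvu \text{ and } v\vDash\varphi,
\]
that is, iff some $v\leq u$ satisfies $\varphi$. So $\mathsf{t}\circ(\cdot)$ is exactly the diamond for the relation $\geq$, and the first three items reduce to well-known unary correspondences.

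\emph{Reflexivity.} The formula $p\supset \mathsf{t}\circ p$ is the unary $\mathsf{T}$-axiom, and $\mathsf{t}\circ(\mathsf{t}\circ p)\supset \mathsf{t}\circ p$ is the $\mathsf{4}$-axiom, both for the converse of $\leq$. For right to left, soundness is immediate. For left to right, I set $V(p)=\{u\}$ and read off $u\leq u$.

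\emph{Transitivity.} For right to left, soundness is again immediate. For left to right, suppose $x\leq y\leq z$. I set $V(p)=\{x\}$ and evaluate at $z$; the formula then yields $x\leq z$.

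\emph{Upset.} Since $u\vDash\mathsf{t}\circ\mathsf{t}$ iff $n\leq u$ for some $n\in N$, the formula $\mathsf{t}\circ\mathsf{t}\supset\mathsf{t}$ says precisely that $N$ is closed upward under $\leq$. This case needs no valuation at all.

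\emph{The three monotonicity conditions.} Unfolding gives
\[
z\vDash(\mathsf{t}\circ p)\circ q \quad\text{iff}\quad \text{there exist } x,y \text{ with } Rxyz,\ y\vDash q, \text{ and some } x^-\leq x \text{ with } x^-\vDash p.
\]
For right to left, the frame condition gives $Rx^-yz$, hence $z\vDash p\circ q$. For left to right, I take $V(p)=\{x^-\}$ and $V(q)=\{y\}$; then $z\vDash p\circ q$ provides some $x',y'$ with $Rx'y'z$, and these must be $x^-$ and $y$, so $Rx^-yz$. The second-argument case is symmetric.

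The third case needs care because the $\leq$ step sits outside the fusion. Here $z^+\vDash\mathsf{t}\circ(p\circ q)$ iff there is $z\leq z^+$ with $Rxyz$ for some $x\vDash p$ and $y\vDash q$. With the singleton valuations $V(p)=\{x\}$ and $V(q)=\{y\}$, the formula forces $Rxyz^+$, and the converse direction is again immediate.

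\emph{Assembling the characterization.} By Definition~\ref{def:basicframes}, a frame $(W,R,N)$ is Routley--Meyer iff $\leq$ is a preorder, $N$ is an upset, and down-down-up holds. Down-down-up is equivalent to the conjunction of the three single-coordinate conditions: one direction is by specialization using reflexivity, and for the other one applies the three conditions successively. So it suffices to check that the three single-coordinate conditions and the combined condition coincide in the presence of reflexivity.

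The main obstacle is only bookkeeping. One must check that reflexivity is available when decomposing down-down-up into the separate conditions, and that the composite condition really follows by composing them. Both are immediate, since each condition preserves $R$-membership one coordinate at a time.
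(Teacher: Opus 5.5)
Your proposal is correct and takes essentially the same route as the paper. The right-to-left directions go by unfolding the clauses for $\mathsf{t}$ and $\circ$, and the left-to-right directions use singleton valuations such as $V(p)=\{x\}$, or $V(p)=\{x^-\}$ and $V(q)=\{y\}$, exactly as in the paper's proofs of the transitivity and first-coordinate items. You also cover the items the paper leaves as an exercise, and you correctly justify the concluding characterization by splitting down-down-up into the three single-coordinate conditions, with reflexivity used for the specialization direction.
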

\begin{proof}
    Follows by Sahlqvist correspondence, but see the appendix (\ref{proofofpr:ModcorrespondenceII}) for representative proofs.
\end{proof}
This suggests another translation, into a modal version of $\mathbf{B}$.
\begin{definition}\label{def:modB+}
    Let $\mathbf{Mod.B^+}$ (Modal $\mathbf{B}^+$) denote the least normal modal logic containing the residuation axioms~\eqref{eq:residuation} and the RM-frame axioms of Proposition~\ref{pr:ModcorrespondenceII}.\footnote{One might also consider adding further axioms to capture restrictions sometimes imposed on Routley--Meyer frames to obtain (typically) simpler semantics. An example is $\mathsf{t}\circ p\supset p$, which is Sahlqvist with correspondent $N\cdot x \subseteq \{x\}$.}
\end{definition} 
Observe that $\mathbf{Mod.B^+}$ is complete for the Kripke frames $(W, R, N)$ that are Routley--Meyer frames in the sense of Proposition~\ref{pr:ModcorrespondenceII}. This follows from Sahlqvist canonicity for those formulas, and from the canonicity of the residuation axioms~\eqref{eq:residuation}, which can be readily verified.\footnote{For example, by observing that a normal modal logic contains $[(p\to q)\circ p]\supset q$ iff it contains $[(p\to \neg q)\circ p]\land q\supset \bot$, and it contains $p\supset [q\to (p\circ q)]$ iff it contains $\Diamond(q,\neg (p\circ q))\land p\supset \bot$. So canonicity follows because $[(p\to \neg q)\circ p]\land q\supset \bot$ and $\Diamond(q,\neg (p\circ q))\land p\supset \bot$ are Sahlqvist: $(p\to \neg q)$ and $\neg (p\circ q)$ are negative formulas; $\bot$ is vacuously positive; and $p$ and $q$ are vacuously boxed atoms.}

We continue by defining the translation $^\circ$, a variant of $^\star$ available in the modal language with $\circ$ and $\mathsf{t}$,
\begin{align*}
    p^\circ &&\mathrel{:=}&& &\mathsf{t}\circ p   && && &&  (\varphi\lor \psi)^\circ&&\mathrel{:=}&& \varphi^\circ \lor \psi^\circ\\
    (\varphi\to \psi)^\circ&&\mathrel{:=}&& &\varphi^\circ\to\psi^\circ  && && &&  (\varphi\land \psi)^\circ&&\mathrel{:=}&& \varphi^\circ\land \psi^\circ\\
    \mathsf{t}^\circ &&\mathrel{:=}&& &\mathsf{t}   && && &&  (\varphi\circ \psi)^\circ&&\mathrel{:=}&& \varphi^\circ \circ \psi^\circ.
\end{align*}
We have the following lemmas, auxiliary to proving full- and faithfulness of the $^\circ$-translation.
\begin{lemma}\label{lm:upsets}
    Let $(W, R, N, V)$ be a Kripke model with $(W, R, N)$ a $\mathbf{Mod.B^+}$-frame. If $V(p)$ is a $\leq$-upset, then for all $w\in W$,
    \[ 
        w\vDash p\quad \text{iff} \quad w\vDash p^\circ.
    \]
\end{lemma}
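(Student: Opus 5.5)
The plan is to unfold the clause for $p^\circ=\mathsf{t}\circ p$ via~\eqref{eq:circclause} and compare it directly with the relation $\leq$. By definition, $w\vDash \mathsf{t}\circ p$ holds iff there exist $n,v$ with $Rnvw$, $n\vDash\mathsf{t}$ and $v\vDash p$. By~\eqref{eq:tclause}, $n\vDash \mathsf{t}$ means $n\in N$. So
\[
w\vDash p^\circ \quad\text{iff}\quad \exists v\,(v\leq w \text{ and } v\in V(p)),
\]
using the abbreviation $v\leq w$ iff $\exists n\in N(Rnvw)$. It therefore remains to show that $w\in V(p)$ iff $w$ has some $\leq$-predecessor in $V(p)$.

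For the left-to-right direction, I would use that $(W,R,N)$ is a $\mathbf{Mod.B^+}$-frame. By Proposition~\ref{pr:ModcorrespondenceII} together with soundness, it then validates $p\supset \mathsf{t}\circ p$, so $\leq$ is reflexive. Hence $w\leq w$, and $w$ itself serves as the witness.

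For the right-to-left direction, suppose $v\leq w$ with $v\in V(p)$. Since $V(p)$ is a $\leq$-upset by hypothesis, $w\in V(p)$, i.e.\ $w\vDash p$.

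There is no serious obstacle here. The only point that needs care is that "$\mathbf{Mod.B^+}$-frame" is read as a frame validating the axioms, so that the correspondence in Proposition~\ref{pr:ModcorrespondenceII} delivers reflexivity of $\leq$. Transitivity is not needed.
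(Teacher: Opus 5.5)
Your proposal is correct and follows the paper's proof: reflexivity of $\leq$ (from the axiom $p\supset\mathsf{t}\circ p$ via Proposition~\ref{pr:ModcorrespondenceII}) gives left-to-right with $w$ as its own witness, and the upset hypothesis on $V(p)$ gives the converse because the $p$-witness for $w\vDash\mathsf{t}\circ p$ lies $\leq$-below $w$. Your unfolding of $w\vDash p^\circ$ as $\exists v\,(v\leq w \text{ and } v\in V(p))$ simply spells out what the paper leaves implicit.
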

\begin{proof}
    By reflexivity, we have that
    \(
        \qquad \quad \;\, w\vDash p\quad \text{only if} \quad w\vDash \mathsf{t}\circ p \quad \text{iff} \quad w\vDash p^\circ. 
    \)
    
    \noindent Conversely, if $V(p)$ is a $\leq$-upset, then
    \(
        \quad w\vDash \mathsf{t}\circ p \quad \text{only if}\quad w\vDash p,
    \)
    
    \noindent as the $p$-witness for $w\vDash \mathsf{t}\circ p$ is $\leq$-below $w$.
\end{proof}

\begin{lemma}\label{lm:astareupsets}
    For all Kripke models $(W, R, N, V)$ with $(W, R, N)$ a $\mathbf{Mod.B^+}$-frame, $\|p^\circ\|$ is a $\leq$-upset.
\end{lemma}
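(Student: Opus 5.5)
We argue directly from the semantic clause for $\circ$ together with the frame conditions that $\mathbf{Mod.B^+}$-frames satisfy, namely those listed in Proposition~\ref{pr:ModcorrespondenceII}. Fix a model $(W,R,N,V)$ over a $\mathbf{Mod.B^+}$-frame. Recall that $p^\circ = \mathsf{t}\circ p$, so by clause~\eqref{eq:circclause} we have $u\vDash p^\circ$ iff there exist $n\in N$ and $v\in V(p)$ with $Rnvu$. Equivalently, $\|p^\circ\|=\{u\mid \exists v\in V(p)\,(v\leq u)\}$, the $\leq$-up-closure of $V(p)$. No assumption on $V(p)$ is needed.

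With this description in hand, the claim reduces to transitivity of $\leq$. Suppose $u\vDash p^\circ$ and $u\leq u'$. Pick a witness $v\in V(p)$ with $v\leq u$. Then $v\leq u'$ by transitivity, which holds because $(W,R,N)$ validates $\mathsf{t}\circ(\mathsf{t}\circ p)\supset \mathsf{t}\circ p$ and hence $\leq$ is transitive by Proposition~\ref{pr:ModcorrespondenceII}. So $u'\vDash \mathsf{t}\circ p = p^\circ$, as required.

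One could instead use the down-down-up condition. Write $Rnvu$ and $u\leq u'$; the condition $Rxyz,\ z\leq z^+ \Rightarrow Rxyz^+$ gives $Rnvu'$ directly, with the same $n$ and $v$. Either route works, and I would present the transitivity one since it matches the characterisation of $\|p^\circ\|$.

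The only point needing care is that the relevant frame properties really hold for $\mathbf{Mod.B^+}$-frames, that is, frames validating the logic. This is exactly the content of the correspondences in Proposition~\ref{pr:ModcorrespondenceII}, which we invoke in the direction from validity to the frame condition. Beyond that there is no real obstacle; the proof is a two-line unfolding of definitions.
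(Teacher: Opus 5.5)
Your proof is correct and is essentially the paper's argument: unfold $u\vDash \mathsf{t}\circ p$ to a witness $v\in V(p)$ with $v\leq u$, then use transitivity of $\leq$, which $\mathbf{Mod.B^+}$-frames satisfy by Proposition~\ref{pr:ModcorrespondenceII}. Your alternative via the condition $Rxyz,\ z\leq z^+\Rightarrow Rxyz^+$ is also valid, and it keeps the same witnesses $n$ and $v$ without using transitivity.
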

\begin{proof}
    If $w\vDash \mathsf{t}\circ p$, then there is $w^-\leq w$ such that $w^-\vDash p$, so if also $w\leq w^+$, then we have $w^-\leq w^+$ by transitivity (cf. Proposition~\ref{pr:ModcorrespondenceII}), whence $w^+\vDash \mathsf{t}\circ p$. 
\end{proof}
\noindent From these lemmas it follows that the translation is full and faithful; see the appendix~(\ref{proofofth:translationB++}) for the proof.

\begin{theorem}\label{th:translationB++}
    For all formulas $\varphi\in \mathcal{L}^+_{\circ, \mathsf{t}}$, 
\begin{align*}
    \vdash_{\mathbf{B}}\varphi\qquad \text{iff}\qquad  \vdash_{\mathbf{Mod.B^+}}\mathsf{t}\supset \varphi^\circ.
\end{align*}
\end{theorem}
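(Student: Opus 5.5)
The plan is to argue semantically, using the completeness of $\mathbf{B}$ (in $\mathcal{L}^+_{\circ,\mathsf{t}}$) with respect to all Routley--Meyer frames and the completeness of $\mathbf{Mod.B^+}$ with respect to Kripke frames $(W,R,N)$ that are Routley--Meyer frames, as noted after Definition~\ref{def:modB+}. The key observation is that the two classes of frames coincide (Proposition~\ref{pr:ModcorrespondenceII}) and the clauses for $\land,\lor,\to,\circ,\mathsf{t}$ agree verbatim; the modal semantics differs only in allowing arbitrary, non-persistent valuations and in evaluating at all worlds. The translation $p^\circ=\mathsf{t}\circ p$ is designed to absorb the first difference, and the antecedent $\mathsf{t}$ absorbs the second, since $w\vDash\mathsf{t}$ iff $w\in N$.

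\textbf{Central claim.} For any $\mathbf{Mod.B^+}$-model $\mathbb{M}=(W,R,N,V)$, define $V^\circ(p):=\|\mathsf{t}\circ p\|_{\mathbb{M}}$. By Lemma~\ref{lm:astareupsets}, $V^\circ$ is persistent, so $\mathfrak{M}^\circ=(W,R,N,V^\circ)$ is a Routley--Meyer model. I will prove by induction on $\varphi\in\mathcal{L}^+_{\circ,\mathsf{t}}$ that for all $w\in W$,
\[
\mathfrak{M}^\circ,w\Vdash\varphi\quad\text{iff}\quad\mathbb{M},w\vDash\varphi^\circ .
\]
The atomic case holds by definition of $V^\circ$. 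The case of $\mathsf{t}$ holds because both sides say $w\in N$. The cases for $\land,\lor$ are immediate. The cases for $\to$ and $\circ$ follow because $\cdot\to\cdot=\neg\Diamond(\cdot,\neg\cdot)$ is evaluated as in~\eqref{eq:defImp}, matching the Routley--Meyer clause, and $\circ$ has identical clauses~\eqref{eq:circclause} in both settings, so the induction hypothesis applies directly.

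\textbf{Fullness} (left to right). Suppose $\vdash_{\mathbf{B}}\varphi$ and let $w\vDash\mathsf{t}$ in some $\mathbf{Mod.B^+}$-model. Then $w\in N$, and soundness of $\mathbf{B}$ on $\mathfrak{M}^\circ$ gives $\mathfrak{M}^\circ,w\Vdash\varphi$. By the claim, $w\vDash\varphi^\circ$. Hence $\mathsf{t}\supset\varphi^\circ$ is valid on all $\mathbf{Mod.B^+}$-frames, and completeness of $\mathbf{Mod.B^+}$ yields $\vdash_{\mathbf{Mod.B^+}}\mathsf{t}\supset\varphi^\circ$.

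\textbf{Faithfulness} (right to left). Given a Routley--Meyer model $(K,R,N,V)$ with $V$ persistent and a normal point $n$ where $\varphi$ fails, read it as a Kripke model. By Proposition~\ref{pr:ModcorrespondenceII} it is a $\mathbf{Mod.B^+}$-frame; the residuation axioms hold because the two clauses are residuated on any frame. Since $V(p)$ is a $\leq$-upset, Lemma~\ref{lm:upsets} gives $V^\circ=V$, so $\mathfrak{M}^\circ$ is the original model. The claim then gives $n\nvDash\varphi^\circ$ while $n\vDash\mathsf{t}$, so soundness of $\mathbf{Mod.B^+}$ shows $\nvdash_{\mathbf{Mod.B^+}}\mathsf{t}\supset\varphi^\circ$.

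\textbf{Main obstacle.} The only delicate point is setting up the claim with the re-valuation $V^\circ$ rather than $V$. A direct induction with the original $V$ fails at atoms when $V$ is not persistent, and under $V^\circ$ it is essential that $\mathsf{t}\circ p$ is again a $\leq$-upset. That upset property is exactly Lemma~\ref{lm:astareupsets}, which relies on transitivity of $\leq$. I also need to confirm that the frame conditions used for soundness of $\mathbf{B}$ are precisely those axiomatized in Proposition~\ref{pr:ModcorrespondenceII}, which that proposition provides.
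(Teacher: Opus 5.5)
Your proposal is correct and follows essentially the paper's own proof. Both arguments go through soundness and completeness of $\mathbf{B}$ and $\mathbf{Mod.B^+}$, use the revaluation $V'(p):=\|p^\circ\|$ (persistent by Lemma~\ref{lm:astareupsets}) for fullness, use Lemma~\ref{lm:upsets} for the atomic case in faithfulness, and finish with the same routine induction. Packaging both directions into one central claim (via $V^\circ=V$ for persistent $V$) and explicitly noting that the residuation axioms hold on every Routley--Meyer frame are only presentational refinements.
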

    What this translation may lack in simplicity or elegance, it may compensate for with greater mathematical naturality. 
If we let $\mu^+$ denote the conjunction of the residuation and RM-frame axioms (i.e., the formulas axiomatizing $\mathbf{Mod.B}^+$), an appeal to the proof of Theorem~\ref{th:translationB++} shows that for all $\Gamma\subseteq\mathcal{L}^+_{\circ, \mathsf{t}}$, 
\begin{align*}
    \mathsf{RMFr}^+(\Gamma)\mathrel{:=}&\{\mathfrak{F}\mid \mathfrak{F} \text{ is a Routley--Meyer frame validating each $\varphi\in \Gamma$} \}\\
    =&\{\mathbb{F}\mid\mathbb{F}\text{ is a $\mathbf{Mod.B^+}$ frame validating $\mathsf{t\supset \varphi^\circ}$ for each $\varphi\in \Gamma$}\}\\
    =&\{\mathbb{F}\mid\mathbb{F}\text{ is a Kripke frame validating $\mu^+$ and $\mathsf{t\supset \varphi^\circ}$ for each $\varphi\in \Gamma$}\}\mathrel{=:}\mathsf{KFr}^+(\mu^+, \mathsf{t}\supset\Gamma^\circ).
\end{align*}
Thus, for any class $\mathcal{C}$ of Routley--Meyer frames, the translation is full and faithful, in that for all formulas $\varphi\in\mathcal{L}^+_{\circ, \mathsf{t}}$,
\[
    \mathcal{C}\Vdash\varphi \qquad \text{iff} \qquad \mathcal{C}\vDash \mathsf{t}\supset \varphi^\circ.
\]
It seems plausible that, and certainly interesting whether, this structural connection can be deepened to include frame-incomplete logics as well. It is also expected to facilitate significant transfer of results. We leave this as open questions and close by extending the translation to the language with negation.

\subsection{Negation \texorpdfstring{$\neg$}{¬}}
In this final section, we translate the relevant negation $\neg$. Proof-theoretically, the basic relevant logic $\mathbf{B}$ is axiomatized by adding the double-negation elimination scheme together with a rule for contraposition:\footnote{The weaker logic $\mathbf{BM}$ is sometimes taken as the fundamental relevant logic with negation (without negation, $\mathbf{B}$ and $\mathbf{BM}$ coincide). The results of this section readily extend to $\mathbf{BM}$ and its extensions. Details are left as an exercise.
}

\begin{center}
\begin{minipage}[c]{0.4\textwidth}
\centering
$\neg\neg\varphi \to \varphi$
\end{minipage}
\begin{minipage}[c]{0.4\textwidth}
\begin{prooftree}
\AxiomC{$\varphi \to \neg\psi$}
\RightLabel{\scriptsize contraposition}
\UnaryInfC{$\psi \to \neg\varphi$}
\end{prooftree}
\end{minipage}
\end{center}
Model-theoretically, Routley--Meyer frames are expanded with a function $^\ast:K\to K$ subject to
\[
    x\leq y \; \text{ only if } \; y^\ast\leq x^\ast\qquad \text{and} \qquad x^{\ast\ast}=x,
\]
so that frames become tuples $(K, R, N, {^\ast})$, with this clause for negation: $\qquad x\Vdash \neg \varphi \qquad \text{iff} \qquad x^\ast\nVdash \varphi.$\\
We translate by expanding the modal grammar with a unary modality $\nBox$.\footnote{This makes literal the idea of negation as a modality (see, e.g.,~\cite{BertoRestall2019}). See also~\cite{MeyerRoutley1974} where the relevant language is augmented by a \textit{weak-assertion} operator that behaves like $\nBox$.} 
Although we shall only be concerned with $^\circ$, for each translation $^\sharp$, we can set
\(
    (\neg\varphi)^\sharp\mathrel{:=}{\nBox}\neg\varphi^\sharp.
\)

With an extra modality present, Kripke frames $(W, R,N, {^\ast})$ now come equipped with an additional binary \textit{relation} ${^\ast}\subseteq W\times W$. The next proposition is useful for the purpose of translation.

\begin{proposition}[Modal $^\ast$-correspondences]\label{pr:ModcorrespondenceIII}
    Let $\mathbb{F}$ be a Kripke frame. Then
    \begin{alignat*}{3}
        \mathbb{F}&\vDash \nBox p \subset\supset \neg \nBox \neg p &\qquad \text{iff} &\qquad &\text{the relation }^\ast \text{ is a function},\\
        \mathbb{F}&\vDash p\subset\supset \nBox \nBox p &\qquad \text{iff} &\qquad &^\ast \text{ is an involution}.
    \end{alignat*}
    In case $\mathbb{F}$ validates the first of the two axioms above, then
    \begin{alignat*}{3}
        \mathbb{F}&\vDash \nBox p\land (\mathsf{t}\circ q)\supset \mathsf{t}\circ(q\land \nBox (\mathsf{t}\circ p)) &\qquad \text{iff} &\qquad &\text{ $x\leq y \;\Rightarrow \;y^\ast \leq x^\ast$}.
    \end{alignat*}
    In sum, a Kripke frame $\mathbb{F}=(W,R, N, {^\ast})$ is a Routley--Meyer frame iff it validates the axioms of Proposition~\ref{pr:ModcorrespondenceII} along with the ones listed here.
\end{proposition}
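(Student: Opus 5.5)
We will read the unary modality $\nBox$ as a \emph{box} over the binary relation ${^\ast}$: $w\vDash\nBox\varphi$ iff $v\vDash\varphi$ for all $v$ with $w\,{^\ast}\,v$. The three correspondences are then checked separately, each by a direct semantic argument. For the ``if'' directions we verify validity under arbitrary valuations. For the ``only if'' directions we pick valuations that are singletons or co-singletons; these are Sahlqvist-style minimal valuations, and no persistence is required of Kripke valuations. Throughout, $\subset\supset$ is read as the material biconditional.

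For the first item, the axiom says $\nBox p\supset\neg\nBox\neg p$ and $\neg\nBox\neg p\supset\nBox p$, i.e.\ box implies diamond and diamond implies box. The first implication is valid iff ${^\ast}$ is serial: take $V(p)=W$, so $\nBox p$ holds everywhere and a point with no successor refutes the diamond. The second implication is valid iff ${^\ast}$ is partially functional: if $x\,{^\ast}\,y_1$ and $x\,{^\ast}\,y_2$ with $y_1\neq y_2$, take $V(p)=\{y_1\}$. Together these say exactly that ${^\ast}$ is a total function.

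For the second item, write ${^\ast}{^\ast}$ for the two-step relation. Validity of $p\supset\nBox\nBox p$ is equivalent to the condition that $x\,{^\ast}{^\ast}\,z$ implies $z=x$; for the converse direction use $V(p)=\{x\}$. Validity of $\nBox\nBox p\supset p$ is equivalent to $x\,{^\ast}{^\ast}\,x$ for every $x$; for the converse direction use $V(p)=W\setminus\{x\}$. Together they say that ${^\ast}{^\ast}$ is the identity, which for a function means $x^{\ast\ast}=x$. This is how ``involution'' is to be read.

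For the third item we assume ${^\ast}$ is a function, so $x\vDash\nBox\varphi$ iff $x^\ast\vDash\varphi$. Using \eqref{eq:circclause} and \eqref{eq:tclause}, we have $x\vDash\mathsf{t}\circ\varphi$ iff some $w\leq x$ satisfies $\varphi$. The antecedent at $y$ therefore says that $y^\ast\vDash p$ and that some $w\leq y$ satisfies $q$. The consequent says that there is $w\leq y$ with $w\vDash q$ and some $u\leq w^\ast$ with $u\vDash p$.

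For the ``if'' direction, given a $q$-witness $w\leq y$, the frame condition yields $y^\ast\leq w^\ast$, so $u:=y^\ast$ is the required $p$-witness. For the ``only if'' direction, let $x\leq y$ and set $V(p)=\{y^\ast\}$ and $V(q)=\{x\}$. Then $y$ satisfies the antecedent. Since $x$ is the only $q$-point, the consequent forces the witness $w$ to be $x$, and since $y^\ast$ is the only $p$-point it forces $u=y^\ast$. Hence $y^\ast\leq x^\ast$. I expect this third correspondence to be the main obstacle: the antecedent and consequent are nested, the formula is only Sahlqvist-like, and the minimal valuation must be chosen so that both existential witnesses are pinned down uniquely. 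The singleton choices above achieve this.

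The ``in sum'' clause then follows by combining these three correspondences with Proposition~\ref{pr:ModcorrespondenceII}. The $(W,R,N)$-part of a Routley--Meyer frame is captured there. The present axioms make ${^\ast}$ an involutive function that reverses $\leq$, which are exactly the extra conditions imposed on frames $(K,R,N,{^\ast})$.
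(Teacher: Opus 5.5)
Your proposal is correct and follows the paper's proof: for the third correspondence you give the same argument, using the valuations $V(p)=\{y^\ast\}$ and $V(q)=\{x\}$ to pin down both witnesses, and using $y^\ast\leq w^\ast$ for the converse. The first two items, which the paper leaves as readily proven, are handled correctly by your seriality/partial-functionality and two-halves arguments. In passing: ${^\ast}{^\ast}=\mathrm{id}$ already forces ${^\ast}$ to be a function, and the third formula is genuinely Sahlqvist, as the paper notes, not merely Sahlqvist-like.
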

\begin{proof}
    All three formulas are Sahlqvist; see the appendix for a proof of the last correspondent (\ref{proofofpr:ModcorrespondenceIII}).
\end{proof}
\begin{definition}
    Let $\mathbf{Mod.B}$ (Modal $\mathbf{B}$) denote the least normal modal logic containing $\mathbf{Mod.B^+}$ and the axioms of the preceding proposition. 
    
    Since $\mathbf{Mod.B}^+$ and the axioms of the preceding proposition are canonical, so is $\mathbf{Mod.B}$, hence it is complete for the Kripke frames $(W, R, N, {^\ast})$ that are Routley--Meyer frames in the language with negation.
\end{definition} 
\begin{theorem}\label{th:translationB}
    For all formulas $\varphi$: $\qquad \vdash_{\mathbf{B}}\varphi\qquad \text{iff}\qquad  \vdash_{\mathbf{Mod.B}}\mathsf{t}\supset \varphi^\circ.$
\end{theorem}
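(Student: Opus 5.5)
We mirror the proof of Theorem~\ref{th:translationB++}, now using the completeness of $\mathbf{B}$ for Routley--Meyer frames with ${}^\ast$ and of $\mathbf{Mod.B}$ for Kripke frames $(W,R,N,{}^\ast)$ that are Routley--Meyer frames. By Propositions~\ref{pr:ModcorrespondenceII} and~\ref{pr:ModcorrespondenceIII}, these two frame classes coincide. So it suffices to compare validities over one and the same frame $\mathbb{F}=(W,R,N,{}^\ast)$, read in two ways. The central claim is a truth lemma. For any persistent valuation $V$ on $\mathbb{F}$, every $\varphi$ of the full language and every $w\in W$,
\[
    (\mathbb{F},V),w\Vdash\varphi \qquad\text{iff}\qquad (\mathbb{F},V),w\vDash\varphi^\circ .
\]
Here $\nBox$ is read along the relation ${}^\ast$, i.e.\ $w\vDash\nBox\chi$ iff $v\vDash\chi$ for all $v$ with $w{}^\ast v$; since ${}^\ast$ is a function, this means $w^\ast\vDash\chi$.

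The truth lemma goes by induction on $\varphi$. The atomic case is Lemma~\ref{lm:upsets}, since $V(p)$ is an upset. The cases $\mathsf{t},\land,\lor,\to,\circ$ are literally the same clauses on both sides, exactly as in the proof of Theorem~\ref{th:translationB++}. For negation, $w\vDash(\neg\varphi)^\circ$ iff $w\vDash\nBox\neg\varphi^\circ$ iff $w^\ast\nvDash\varphi^\circ$. By the induction hypothesis this holds iff $w^\ast\nVdash\varphi$, iff $w\Vdash\neg\varphi$.

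With the truth lemma, the two directions go as follows.

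\emph{Faithfulness} (right to left). Suppose $\nvdash_{\mathbf{B}}\varphi$. By completeness there is a Routley--Meyer frame, a persistent $V$ and $n\in N$ with $n\nVdash\varphi$. The frame is a $\mathbf{Mod.B}$-frame, and the truth lemma gives $n\vDash\mathsf{t}\land\neg\varphi^\circ$. Soundness of $\mathbf{Mod.B}$ then yields $\nvdash_{\mathbf{Mod.B}}\mathsf{t}\supset\varphi^\circ$.

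\emph{Fullness} (left to right). Take a $\mathbf{Mod.B}$-frame $\mathbb{F}$, an \emph{arbitrary} valuation $V$ and $n\in N$. Define $V'(p):=\|\mathsf{t}\circ p\|_V$, which is a $\leq$-upset by Lemma~\ref{lm:astareupsets}. Since $p^\circ=\mathsf{t}\circ p$, we want $\|\varphi^\circ\|_V=\|\varphi^\circ\|_{V'}$. This holds because $\|(\mathsf{t}\circ p)\|_{V'}=\|\mathsf{t}\circ(\mathsf{t}\circ p)\|_V=\|\mathsf{t}\circ p\|_V$, using transitivity and reflexivity of $\leq$ (Proposition~\ref{pr:ModcorrespondenceII}). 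The remaining clauses are compositional, and $\nBox$ commutes with the substitution. Now $\vdash_{\mathbf{B}}\varphi$ and soundness give $n\Vdash_{V'}\varphi$. The truth lemma then gives $n\vDash_{V'}\varphi^\circ$, hence $n\vDash_V\varphi^\circ$. By completeness of $\mathbf{Mod.B}$ we conclude $\vdash_{\mathbf{Mod.B}}\mathsf{t}\supset\varphi^\circ$.

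The main point to check carefully is the negation step. Persistence of $\|\neg\varphi\|$ is not needed for the induction itself, since both sides use the same point $w^\ast$. Persistence is, however, what makes soundness of $\mathbf{B}$ apply, and it holds via the condition $x\leq y\Rightarrow y^\ast\leq x^\ast$. That condition is secured by the third axiom of Proposition~\ref{pr:ModcorrespondenceIII}, once functionality of ${}^\ast$ is available.
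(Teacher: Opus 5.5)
Your proposal is correct and follows the paper's route, which simply reruns the proof of Theorem~\ref{th:translationB++}: the frame classes coincide, the truth lemma rests on Lemma~\ref{lm:upsets}, and fullness uses $V'(p):=\|p^\circ\|$, persistent by Lemma~\ref{lm:astareupsets}. The negation step goes just as you wrote it. The only cosmetic difference is that, for fullness, the paper inducts directly on $x\Vdash_{V'}\alpha$ iff $x\vDash_V\alpha^\circ$, with the base case given by the definition of $V'$, so it never needs your separate invariance step $\|\varphi^\circ\|_V=\|\varphi^\circ\|_{V'}$.
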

\begin{proof}
    Same proof as that of Theorem~\ref{th:translationB++}.
\end{proof}
Letting $\mu$ be the conjunction of $\mu^+$ and the formulas of Proposition~\ref{pr:ModcorrespondenceIII}, an appeal to the proof strengthens the theorem to show that for any set of relevant formulas $\Gamma$,
\begin{align*}
    \mathsf{RMFr}(\Gamma)\mathrel{:=}&\{\mathfrak{F}\mid \mathfrak{F} \text{ is a Routley--Meyer frame validating each $\varphi\in \Gamma$} \}\\
    =&\{\mathbb{F}\mid\mathbb{F}\text{ is a $\mathbf{Mod.B}$ frame validating $\mathsf{t\supset \varphi^\circ}$ for each $\varphi\in \Gamma$}\}\\
    =&\{\mathbb{F}\mid\mathbb{F}\text{ is a Kripke frame validating $\mu$ and $\mathsf{t\supset \varphi^\circ}$ for each $\varphi\in \Gamma$}\}\mathrel{=:}\mathsf{KFr}(\mu, \mathsf{t}\supset\Gamma^\circ).
\end{align*}
We end by concluding that for any class $\mathcal{C}$ of Routley--Meyer frames for the language with negation, the translation is full and faithful, in that for all relevant formulas $\varphi$,
\[
    \mathcal{C}\Vdash\varphi \qquad \text{iff} \qquad \mathcal{C}\vDash \mathsf{t}\supset \varphi^\circ.
\]









\paragraph{Acknowledgements.} I wish to thank Shawn Standefer for helpful comments on an earlier draft, Josef von-Hoffmann Doyle for discussions of the material, and Maria Aloni and the anonymous referees for constructive suggestions. The research was supported by the Nothing is Logical (NihiL) project (NWO OC 406.21.CTW.023).





\appendix

\section{Technical appendix}
The first lemma gives the model transformation $F$ and proves~\eqref{eq:chu}, while the subsequent lemma gives the model transformation $G$ and proves~\eqref{eq:faithfulness}.
\begin{lemma}\label{Kripke-Routley}
    Let $(W, R, V)$ be a Kripke model. For $0\notin W$, define
    \(
        R'\mathrel{:=}R \cup \{(0,x,x)\mid x\in W\cup\{0\}\}.
    \)
    Then $(W\cup\{0\}, R', \{0\}, V)$ is a Routley--Meyer model and for all $x\in W$ and formulas $\alpha$ of the positive relevant language, 
    \[
        x\Vdash \alpha\qquad \text{iff}\qquad x\vDash \alpha^\star.
    \]
\end{lemma}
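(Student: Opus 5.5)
The plan is to check first that the construction yields a Routley--Meyer model, and then to prove the equivalence by induction on $\alpha$, simultaneously for all $x\in W$. Everything rests on one observation: since $0\notin W$ and $R\subseteq W^3$, a triple in $R'$ has first coordinate $0$ exactly when it is one of the added triples $(0,x,x)$.

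\emph{Frame conditions.} Here $N=\{0\}$, so $x\leq y$ holds iff $R'0xy$. By the observation, this holds iff $x=y$, so $\leq$ is the identity on $W\cup\{0\}$. It follows that $\leq$ is a preorder and that $\{0\}$ is trivially an upset. The down-down-up condition is trivial too: with $x^-=x$, $y^-=y$ and $z^+=z$ it just restates $R'xyz$. Persistence of $V$ is likewise trivial. We read $V$ unchanged on the enlarged domain, so $0\notin V(p)$ for every $p$. Hence $(W\cup\{0\},R',\{0\},V)$ is a Routley--Meyer model in the sense of Definition~\ref{def:basicframes}.

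\emph{Induction.} We prove, by induction on $\alpha$, the claim that for every $x\in W$: $x\Vdash\alpha$ iff $x\vDash\alpha^\star$. For atoms, $p^\star=p$ and both sides say $x\in V(p)$. The cases of $\land$ and $\lor$ are immediate, since both the Routley--Meyer clauses and the translation are homomorphic for these connectives.

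For $\alpha=\varphi\to\psi$ and $x\in W$, the key step is the observation again: because $x\neq 0$, we have $R'xyz$ iff $Rxyz$, and in that case $y,z\in W$. So the Routley--Meyer clause at $x$ reads: for all $y,z\in W$ with $Rxyz$, if $y\Vdash\varphi$ then $z\Vdash\psi$. The induction hypothesis applies at $y$ and $z$ because both lie in $W$, which is why we induct over all of $W$ at once. Applying it turns the clause into: for all $y,z$ with $Rxyz$, if $y\vDash\varphi^\star$ then $z\vDash\psi^\star$. By \eqref{eq:defImp}, this is exactly $x\vDash\neg\Diamond(\varphi^\star,\neg\psi^\star)=(\varphi\to\psi)^\star$.

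The only point needing care is that the auxiliary point $0$ never occurs as a $y$ or $z$ in a triple issuing from a point of $W$. If it did, the induction would need the claim at $0$, where it can fail, since $0$ is not a point of the Kripke model at all. The observation rules this out, and this is why the lemma is stated only for $x\in W$.
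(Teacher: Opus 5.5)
Your proposal is correct and follows essentially the same route as the paper: induction on $\alpha$ simultaneously for all $x\in W$, with the implication case resting on the observation that for $x\in W$, $R'xyz$ holds iff $Rxyz$, which forces $y,z\in W$. The one difference is that you spell out the frame conditions — noting that $\leq$ is the identity, so the preorder, upset, down-down-up and persistence conditions are all trivial — whereas the paper just calls them ``readily verified''.
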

\begin{proof}
    It is readily verified that $(W\cup\{0\}, R', \{0\}, V)$ is a Routley--Meyer model. The biimplication is proven by induction. The only non-trivial case is the inductive step for implication. There, we have the following for all $x\in W$:
    \begin{align*}
        &x\Vdash \beta\to\gamma && \overset{\text{(def)}}{\Leftrightarrow} && \text{for all $y,z\in W\cup \{0\}$: if $R'xyz$ and } y\Vdash \beta,\text{ then $z\Vdash \gamma$}\\
        & && \;\overset{\text{(i)}}{\Leftrightarrow} && \text{for all $y,z\in W$: if $R xyz$ and } y\Vdash \beta,\text{ then $z\Vdash \gamma$}\\
        & && \overset{\text{(IH)}}{\Leftrightarrow} &&\text{for all $y,z\in W$: if $R xyz$ and } y\vDash \beta^\star,\text{ then $z\vDash \gamma^\star$}\\
        & && \overset{\text{(def)}}{\Leftrightarrow} &&x\vDash \beta^\star\to\gamma^\star\\
        & && \overset{\text{(def)}}{\Leftrightarrow} &&x\vDash (\beta\to\gamma)^\star.
\end{align*}
Here, (i) holds because when $x\in W$, then $R'xyz$ implies $y,z\in W$ and $R xyz$.
\end{proof}

\begin{lemma}\label{Routley-Kripke}
    Let $(\mathfrak{F}, V)$ with $\mathfrak{F}=(K, R, N)$ be a Routley--Meyer model. Then $(K, R, V)$ is a Kripke model and for all $x\in K$ and formulas $\alpha$ of the positive relevant language, 
    \[
        x\Vdash \alpha\qquad \text{iff}\qquad x\vDash \alpha^\star.
    \]
\end{lemma}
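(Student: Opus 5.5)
The argument has two parts: first check that $(K,R,V)$ is a Kripke model, then prove the biconditional by structural induction on $\alpha$.

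\emph{$(K,R,V)$ is a Kripke model.} A Kripke model only needs a set, a ternary relation on it and a valuation. $R\subseteq K^3$ and $V:\mathsf{Prop}\to\mathcal{P}(K)$ already come with the Routley--Meyer model. So we simply forget $N$ and the frame conditions; they are not needed for this lemma.

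\emph{The induction.} We show, for every $x\in K$ simultaneously, that $x\Vdash\alpha$ iff $x\vDash\alpha^\star$.
\begin{itemize}
\item Atoms: $p^\star=p$, and both relations reduce to $x\in V(p)$.
\item Conjunction and disjunction: the translation commutes with $\land$ and $\lor$, and both semantics interpret these Booleanly. The induction hypothesis at the same point $x$ suffices.
\item Implication: we unfold definitions and use the observation~\eqref{eq:defImp}:
\begin{align*}
x\Vdash\beta\to\gamma \;&\Leftrightarrow\; \text{for all } y,z\in K:\ \text{if } Rxyz \text{ and } y\Vdash\beta, \text{ then } z\Vdash\gamma\\
&\Leftrightarrow\; \text{for all } y,z\in K:\ \text{if } Rxyz \text{ and } y\vDash\beta^\star, \text{ then } z\vDash\gamma^\star\\
&\Leftrightarrow\; x\vDash\neg\Diamond(\beta^\star,\neg\gamma^\star)\\
&=\; x\vDash(\beta\to\gamma)^\star.
\end{align*}
The second step applies the induction hypothesis at the points $y$ and $z$. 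This is why the hypothesis must be stated for all points of $K$ at once, not just for $x$.
\end{itemize}

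There is no real obstacle. The only point needing care is that persistence and the normal points play no role: satisfaction at an arbitrary point of a Routley--Meyer model is computed by exactly the same clauses as Kripke satisfaction on $(K,R,V)$. This contrasts with Lemma~\ref{Kripke-Routley}, where the new point $0$ had to be handled. Here the identity map on points works directly.
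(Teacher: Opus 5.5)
Your proposal is correct and follows the paper's proof essentially step for step. You forget $N$ to obtain the Kripke model $(K,R,V)$ and induct on $\alpha$ uniformly over all points. The only nontrivial case is implication, where you unfold the Routley--Meyer clause, apply the induction hypothesis at $y$ and $z$, and fold back via \eqref{eq:defImp} to obtain $x\vDash(\beta\to\gamma)^\star$.
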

\begin{proof}
    $(K, R, V)$ is evidently a Kripke model. We prove the biimplication by induction on $\alpha$. The base case is by definition, and the inductive steps for conjunction and disjunction are immediate by the induction hypothesis. Lastly, for the case where $\alpha=\beta\to \gamma$, we have the following:
\begin{align*}
        &x\Vdash \beta\to\gamma && \overset{\text{(def)}}{\Leftrightarrow} && \text{for all $y,z$: if $Rxyz$ and } y\Vdash \beta,\text{ then $z\Vdash \gamma$}\\
        & && \overset{\text{(IH)}}{\Leftrightarrow} &&\text{for all $y,z$: if $Rxyz$ and } y\vDash \beta^\star,\text{ then $z\vDash \gamma^\star$}\\
        & && \overset{\text{(def)}}{\Leftrightarrow} &&x\vDash \beta^\star\to\gamma^\star\\
        & && \overset{\text{(def)}}{\Leftrightarrow} &&x\vDash (\beta\to\gamma)^\star,
\end{align*}
which shows the required equivalence.
\end{proof}

The following two lemmas build on the preceding two lemmas, in order to establish~\eqref{eq:chu2} and~\eqref{eq:faithfulness2} needed for Theorem~\ref{th:B+bullet}.
\begin{lemma}\label{Kripke-RoutleyBullet}
    Let $(W, R, V)$ be a Kripke model. For $0\notin W$, define
    \(
        R'\mathrel{:=}R \cup \{(0,x,x)\mid x\in W\cup\{0\}\}.
    \)
    Then $(W\cup\{0\}, R', \{0\}, V)$ is a Routley--Meyer model and for all $x\in W$ and formulas $\alpha$ of the positive relevant language, 
    \[
        0\Vdash \alpha\qquad \text{only if}\qquad x\vDash \alpha^\ast.
    \]
\end{lemma}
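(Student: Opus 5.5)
The proof is by induction on $\alpha$, using Lemma~\ref{Kripke-Routley} to handle the $^\star$-translated subformulas that occur under $\Box_\to$. That lemma already shows that $(W\cup\{0\}, R', \{0\}, V)$ is a Routley--Meyer model, so only the ``only if'' claim needs proof.

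\textbf{The preorder.} I first record what $\leq$ looks like in this model. By definition, $u\leq v$ iff $R'0uv$. Since $R\subseteq W^3$, the only triples in $R'$ with first coordinate $0$ are the added ones $(0,u,u)$. Hence $R'0uv$ iff $u=v$, for all $u,v\in W\cup\{0\}$. So $\leq$ is just the identity relation.

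\textbf{Base case and connectives.} For $\alpha=p$: $V$ is a valuation on $W$, so $0\notin V(p)$. Hence $0\nVdash p$ and the implication holds vacuously. (Since $\leq$ is the identity, $V$ is trivially persistent.)

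For conjunction: if $0\Vdash\beta\land\gamma$, the induction hypothesis gives $x\vDash\beta^\ast$ and $x\vDash\gamma^\ast$, that is, $x\vDash(\beta\land\gamma)^\ast$. Disjunction is handled the same way, since $0\Vdash\beta\lor\gamma$ means $0\Vdash\beta$ or $0\Vdash\gamma$.

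\textbf{Implication.} This is the main case; note that it does not use the induction hypothesis for $^\ast$ at all. Suppose $0\Vdash\beta\to\gamma$. Because $R'0yz$ iff $y=z$, this says: for every $y\in W\cup\{0\}$, if $y\Vdash\beta$ then $y\Vdash\gamma$.

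We must show $x\vDash\Box_\to(\beta^\star\supset\gamma^\star)$. Unfolding the definition $\Box_\to\varphi=\neg\Diamond(\top,\neg\varphi)$, this means: for all $v,u\in W$ with $Rxvu$, we have $u\vDash\beta^\star\supset\gamma^\star$.

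So fix such $u\in W$ and assume $u\vDash\beta^\star$. By Lemma~\ref{Kripke-Routley}, which applies since $u\in W$, we get $u\Vdash\beta$. By the hypothesis above, $u\Vdash\gamma$. Applying Lemma~\ref{Kripke-Routley} again gives $u\vDash\gamma^\star$, as required.

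The only delicate points are the computation that $\leq$ is the identity and the observation that $0\notin V(p)$; everything else reduces directly to Lemma~\ref{Kripke-Routley}.
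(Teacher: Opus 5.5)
Your proposal is correct and follows the paper's proof essentially step for step. Both arguments use induction on $\alpha$, dispatch the base case vacuously via $0\notin V(p)$, and reduce the implication case to Lemma~\ref{Kripke-Routley} via the observation that $R'0yz$ holds iff $y=z$. Your explicit check that $\leq$ is the identity in this model is a detail the paper leaves implicit.
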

\begin{proof}
    The proof is by induction on $\alpha$. The base case holds as $0\notin V(p)$ for all propositional letters $p$. The inductive cases for $\land, \lor$ are immediate by the induction hypothesis. Lastly, for the case where $\alpha=\beta\to\gamma$, let $x\in W$ be arbitrary. Using Lemma~\ref{Kripke-Routley}, we obtain the required as follows.
    \begin{align*}
        &0\Vdash \beta\to\gamma && \overset{\text{(def)}}{\Leftrightarrow} && \text{for all $y,z\in W\cup \{0\}$: if $R'0yz$ and } y\Vdash \beta,\text{ then $z\Vdash \gamma$}\\
        & && \;\overset{}{\Leftrightarrow} && \text{for all $z\in W\cup\{0\}$: if } z\Vdash \beta,\text{ then $z\Vdash \gamma$}\\
        & && \;\overset{}{\Rightarrow} && \text{for all $z\in W$: if } z\Vdash \beta,\text{ then $z\Vdash \gamma$}\\
        & && \;\overset{\text{\ref{Kripke-Routley}}}{\Leftrightarrow} && \text{for all $z\in W$: if } z\vDash \beta^\star,\text{ then $z\vDash \gamma^\star$}\\
        & && \;\overset{}{\Rightarrow} &&\text{for all $y,z\in W$: if $R xyz$,}\text{ then $z\vDash \beta^\star\supset\gamma^\star$}\\
        & && \;\overset{}{\Leftrightarrow} &&\text{for all $y,z\in W$: if $R xyz$ and } y\vDash \top,\text{ then $z\vDash \beta^\star\supset\gamma^\star$}\\
        & && \overset{\text{(def)}}{\Leftrightarrow} &&x\vDash \Box_\to (\beta^\star\supset\gamma^\star)\\
        & && \overset{\text{(def)}}{\Leftrightarrow} &&x\vDash (\beta\to\gamma)^\ast.
\end{align*}    
\end{proof}

\begin{lemma}\label{Routley-KripkeBullet}
    Let $(\mathfrak{F}, V)$ with $\mathfrak{F}=(K, R, N)$ be a Routley--Meyer model. Then $(K, R, V)$ is a Kripke model and for all $n\in N$ and formulas $\alpha$ of the positive relevant language, 
    \[
        n\Vdash \alpha\qquad \text{if}\qquad n\vDash \alpha^\ast.
    \]
\end{lemma}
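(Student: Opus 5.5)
We proceed by induction on $\alpha$, proving the stated implication for every normal point $n\in N$ simultaneously. The base case is immediate, since $p^\ast=p$ and the valuation is unchanged, so $n\vDash p$ iff $n\in V(p)$ iff $n\Vdash p$. The cases for $\land$ and $\lor$ follow directly from the induction hypothesis, because $^\ast$ commutes with both connectives and the implication points in the same direction on each side.

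The only substantive case is $\alpha=\beta\to\gamma$. Suppose $n\vDash(\beta\to\gamma)^\ast$, that is, $n\vDash\Box_\to(\beta^\star\supset\gamma^\star)$. Unfolding $\Box_\to$ via \eqref{eq:defImp} with antecedent $\top$, this says: for all $y,z\in K$, if $Rnyz$ then $z\vDash\beta^\star\supset\gamma^\star$. We must show $n\Vdash\beta\to\gamma$. So fix $y,z$ with $Rnyz$ and $y\Vdash\beta$, and aim for $z\Vdash\gamma$.

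Here we do not use the induction hypothesis for $^\ast$. Instead we pass to the $^\star$-translation through Lemma~\ref{Routley-Kripke}, which holds at every point of $K$. Since $n\in N$ and $Rnyz$, the definition of $\leq$ gives $y\leq z$. By persistence, from $y\Vdash\beta$ we obtain $z\Vdash\beta$. Lemma~\ref{Routley-Kripke} then yields $z\vDash\beta^\star$. The hypothesis supplies $z\vDash\beta^\star\supset\gamma^\star$, so $z\vDash\gamma^\star$. A second application of Lemma~\ref{Routley-Kripke} gives $z\Vdash\gamma$, as required.

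The one step needing care is the transfer of $\beta$ from $y$ to $z$. The modal hypothesis only concerns the second coordinate $z$ of the triple, so we must use that $Rnyz$ with $n$ normal means $y\leq z$, and then invoke persistence of the Routley--Meyer satisfaction relation. Persistence is the standard fact already noted before \eqref{eq:handyfact}. This is also why the lemma holds only as an \emph{if}, and only at normal points: at non-normal $x$, $Rxyz$ gives no link between $y$ and $z$.
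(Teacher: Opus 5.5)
Your proof is correct and matches the paper's argument essentially step for step. Only the implication case is substantive. There you use $Rnyz$ with $n\in N$ to get $y\leq z$, persistence to move $\beta$ from $y$ to $z$, and Lemma~\ref{Routley-Kripke} (not the induction hypothesis for $^\ast$) to pass between $\Vdash$ and the $^\star$-translation at $z$. The paper writes this as a chain of equivalences whose single one-way step is justified by persistence, which is exactly your pointwise argument.
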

\begin{proof}
    Once again, the proof is by induction on $\alpha$, and only the implication case, $\alpha=\beta\to \gamma$, is non-trivial. In that case, we have the following:
\begin{align*}
        &n\Vdash \beta\to\gamma && \overset{\text{(def)}}{\Leftrightarrow} && \text{for all $y,z$: if $Rnyz$ and } y\Vdash \beta,\text{ then $z\Vdash \gamma$}\\
        & && \;\overset{\text{(i)}}{\Leftarrow} &&\text{for all $y,z$: if $Rnyz$ and } z\Vdash \beta,\text{ then $z\Vdash \gamma$}\\
        & && \;\overset{\text{\ref{Routley-Kripke}}}{\Leftrightarrow} &&\text{for all $y,z$: if $Rnyz$ and } z\vDash \beta^\star,\text{ then $z\vDash \gamma^\star$}\\
        & && \;\overset{\text{}}{\Leftrightarrow} &&\text{for all $y,z$: if $R nyz$,}\text{ then $z\vDash \beta^\star\supset\gamma^\star$}\\
        & && \;\overset{}{\Leftrightarrow} &&\text{for all $y,z$: if $R nyz$ and } y\vDash \top,\text{ then $z\vDash \beta^\star\supset\gamma^\star$}\\
        & && \overset{\text{(def)}}{\Leftrightarrow} &&n\vDash \Box_\to (\beta^\star\supset\gamma^\star)\\
        & && \overset{\text{(def)}}{\Leftrightarrow} &&n\vDash (\beta\to\gamma)^\ast,
\end{align*}
where (i) followed from persistence in Routley--Meyer models: $y\Vdash \beta$ and $y\leq z$ imply $z\Vdash \beta$.
\end{proof}
Next, we provide proof of representative items from Proposition~\ref{pr:ModcorrespondenceII}. Recall that for a frame $(W, R, N)$, we write $x\leq y$ iff $\exists n\in N(Rnxy)$.
{
\renewcommand{\thetheorem}{2.1} 
\begin{proposition}[Restated]\label{proofofpr:ModcorrespondenceII}
    Let $\mathbb{F}=(W,R, N)$ be a Kripke frame. Then
    \begin{alignat*}{3}
        \mathbb{F}&\vDash p\supset \mathsf{t}\circ p &\qquad \text{iff} &\qquad &\leq \text{ is reflexive},\\
        \mathbb{F}&\vDash \mathsf{t}\circ (\mathsf{t}\circ p)\supset \mathsf{t}\circ p &\qquad \text{iff} &\qquad &\leq \text{ is transitive},\\
        \mathbb{F}&\vDash \mathsf{t}\circ \mathsf{t}\supset \mathsf{t} &\qquad \text{iff} &\qquad &\text{$N$ is an $\leq$-upset},\\
        \mathbb{F}&\vDash (\mathsf{t}\circ p)\circ q\supset p\circ q  &\qquad \text{iff} &\qquad &\text{ $Rxyz, x^-\leq x \Rightarrow Rx^-yz$},\\
        \mathbb{F}&\vDash p\circ (\mathsf{t}\circ q)\supset p\circ q  &\qquad \text{iff} &\qquad &\text{ $Rxyz, y^-\leq y \Rightarrow Rxy^-z$},\\
        \mathbb{F}&\vDash \mathsf{t}\circ (p\circ q)\supset p\circ q  &\qquad \text{iff} &\qquad &\text{ $Rxyz, z\leq z^+ \Rightarrow Rxyz^+$}.
    \end{alignat*}
    In sum, a Kripke frame $\mathbb{F}=(W,R, N)$ is a Routley--Meyer frame iff it validates the listed axioms.
\end{proposition}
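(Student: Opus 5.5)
Each of the six items is a Sahlqvist-style frame correspondence, so I will check it directly by unfolding the semantic clauses for $\mathsf{t}$ and $\circ$: $u\vDash\mathsf{t}$ iff $u\in N$, and $u\vDash\varphi\circ\psi$ iff there are $w,v$ with $Rwvu$, $w\vDash\varphi$ and $v\vDash\psi$. Since $x\leq y$ abbreviates $\exists n\in N(Rnxy)$, we get $u\vDash\mathsf{t}\circ\varphi$ iff some $v\leq u$ satisfies $\varphi$. So $\mathsf{t}\circ\cdot$ is just the ``downward diamond'' along $\leq$, and the six formulas become ordinary unary/binary correspondence problems.

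For each item, the right-to-left direction is immediate by unfolding. Take transitivity: if $u\vDash\mathsf{t}\circ(\mathsf{t}\circ p)$, there are $w\leq v\leq u$ with $w\vDash p$, so $w\leq u$ and hence $u\vDash\mathsf{t}\circ p$. The left-to-right direction uses the standard minimal-valuation trick. Reflexivity: take $V(p)=\{x\}$; then $x\vDash p$ forces some $v\leq x$ with $v\in\{x\}$, i.e.\ $x\leq x$. Transitivity: given $x\leq y\leq z$, put $V(p)=\{x\}$. Then $z\vDash\mathsf{t}\circ(\mathsf{t}\circ p)$, so $z\vDash\mathsf{t}\circ p$, which gives $x\leq z$. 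The upset item has no variables: if $n\in N$ and $n\leq m$, then witnesses $n'\in N$ with $Rn'nm$ give $m\vDash\mathsf{t}\circ\mathsf{t}$, hence $m\in N$. Conversely, $m\vDash\mathsf{t}\circ\mathsf{t}$ says exactly that $n\leq m$ for some $n\in N$.

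The three monotonicity items follow the same pattern with two variables. For $(\mathsf{t}\circ p)\circ q\supset p\circ q$, assume $Rxyz$ and $x^-\leq x$, and set $V(p)=\{x^-\}$ and $V(q)=\{y\}$. Then $x\vDash\mathsf{t}\circ p$, so $z\vDash(\mathsf{t}\circ p)\circ q$, and hence $z\vDash p\circ q$. This yields $w,v$ with $Rwvz$, $w=x^-$ and $v=y$, i.e.\ $Rx^-yz$. For the converse, a witness $x^-\leq x$ with $Rxyz$ gives $Rx^-yz$ directly. The $y$-slot item is symmetric. For the $z$-slot item, assume $Rxyz$ and $z\leq z^+$, and set $V(p)=\{x\}$, $V(q)=\{y\}$. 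Then $z\vDash p\circ q$, so $z^+\vDash\mathsf{t}\circ(p\circ q)$, and the axiom gives $Rxyz^+$.

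The ``in sum'' clause then follows by comparing with Definition~\ref{def:basicframes}. The only point needing care is that down-down-up is a single condition with three simultaneous monotonicities. It is equivalent to the conjunction of the three one-slot conditions, since we can apply them successively: from $Rxyz$ get $Rx^-yz$, then $Rx^-y^-z$, then $Rx^-y^-z^+$. Conversely, each one-slot condition is the special case of down-down-up where the other two coordinates are held fixed, which uses reflexivity of $\leq$. I expect no real obstacle. The only subtlety is this decomposition step together with remembering that $\circ$ is evaluated at the third coordinate. I would write out the transitivity and first monotonicity cases in full and note that the others are analogous.
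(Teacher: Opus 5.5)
Your proposal is correct and follows essentially the same route as the paper: the left-to-right directions use minimal valuations ($V(p)=\{x\}$ for transitivity; $V(p)=\{x^-\}$ and $V(q)=\{y\}$ for the first monotonicity item), the converses are direct unfoldings of the $\mathsf{t}\circ$ clause, and the paper writes out exactly the two cases you single out. Your explicit check of the ``in sum'' clause, which decomposes down-down-up into the three one-slot conditions and notes that recovering each one-slot condition uses reflexivity of $\leq$, fills in a step the paper leaves implicit.
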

\addtocounter{theorem}{-1} 
}
\begin{proof}
    We prove the second and the fourth item and leave the rest as an exercise.

    First, suppose $\mathbb{F}\vDash \mathsf{t}\circ (\mathsf{t}\circ p)\supset \mathsf{t}\circ p$ and $x\leq y\leq z$; i.e., that there are $m, n\in N$ s.t. $Rmxy$ and $Rnyz$. We are to find $l\in N$ such that $Rlxz$. Set $V(p)\mathrel{:=}\{x\}$. Then $z\vDash \mathsf{t}\circ (\mathsf{t}\circ p)$, so since $\mathbb{F}\vDash \mathsf{t}\circ (\mathsf{t}\circ p)\supset \mathsf{t}\circ p$, we have $z\vDash \mathsf{t}\circ p$, which precisely shows that there is $l\in N$ with $Rlxz$.

    Conversely, assume $\leq$ is transitive, and let $V,z$ be arbitrary with $z\vDash \mathsf{t}\circ (\mathsf{t}\circ p)$. We are to show that $z\vDash \mathsf{t}\circ p$. Since $z\vDash \mathsf{t}\circ (\mathsf{t}\circ p)$, there are $m,n\in N$ and $x,y\in W$ with $Rmxy, Rnyz, x\vDash p$. Thus, $x\leq y\leq z$, so by transitivity $x\leq z$, hence there is $l\in N$ with $Rlxz$, as desired.

    Second, suppose $\mathbb{F}\vDash (\mathsf{t}\circ p)\circ q\supset p\circ q$ and $Rxyz, x^-\leq x$; i.e., there is $n\in N$ s.t. $Rnx^-x$. We are to show that $Rx^-yz$. Set $V(p)\mathrel{:=}\{x^-\}$, $V(q)\mathrel{:=}\{y\}$. Then $z\vDash (\mathsf{t}\circ p)\circ q$, so $z\vDash p\circ q$, whence $Rx^-yz$, as required.

    Conversely, assume that for all $x^-,x,y,z\in W$: $Rxyz$ and $x^-\leq x$ imply $Rx^-yz$. Let $V,z$ be arbitrary with $z\vDash (\mathsf{t}\circ p)\circ q$. We are to show that $z\vDash p\circ q$. Since $z\vDash (\mathsf{t}\circ p)\circ q$, there are $n\in N$ and $x^-,x,y\in W$ such that $Rnx^-x, Rxyz, x^-\vDash p, y\vDash q$. But then $Rx^-yz$, hence $z\vDash p\circ q$. 
\end{proof}

The proofs of Theorem~\ref{th:translationB++} and Proposition~\ref{pr:ModcorrespondenceIII} follow next.
{
\renewcommand{\thetheorem}{2.5} 
\begin{theorem}[Restated]\label{proofofth:translationB++} 
    For all formulas $\varphi\in \mathcal{L}^+_{\circ, \mathsf{t}}$, 
\begin{align*}
    \vdash_{\mathbf{B}}\varphi\qquad \text{iff}\qquad  \vdash_{\mathbf{Mod.B^+}}\mathsf{t}\supset \varphi^\circ.
\end{align*}
\end{theorem}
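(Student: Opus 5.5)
The plan is to argue semantically. By completeness of $\mathbf{Mod.B^+}$ for Kripke frames $(W,R,N)$ that are Routley--Meyer frames, and completeness of $\mathbf{B}$ (in $\mathcal{L}^+_{\circ,\mathsf{t}}$) for Routley--Meyer frames, it suffices to show the following. For every frame $\mathfrak{F}=(K,R,N)$ in this common class (Proposition~\ref{pr:ModcorrespondenceII}), $\varphi$ holds at all normal points under all \emph{persistent} valuations iff $\mathsf{t}\supset\varphi^\circ$ holds everywhere under \emph{all} valuations. The key tool is a \emph{truth lemma}: given any Kripke valuation $V$ on such a frame, define the persistent valuation $V^\uparrow(p):=\|p^\circ\|_V=\|\mathsf{t}\circ p\|_V$, which is a $\leq$-upset by Lemma~\ref{lm:astareupsets}. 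I then show by induction on $\alpha\in\mathcal{L}^+_{\circ,\mathsf{t}}$ that for all $x$,
\[
(\mathfrak{F},V^\uparrow),x\Vdash\alpha\qquad\text{iff}\qquad(\mathfrak{F},V),x\vDash\alpha^\circ .
\]

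The induction runs as follows. The atomic case holds by definition of $V^\uparrow$. The case of $\mathsf{t}$ is immediate, since $\mathsf{t}^\circ=\mathsf{t}$ and both clauses read $x\in N$. The cases $\land,\lor$ are trivial. For $\to$, the Routley--Meyer clause and the modal clause~\eqref{eq:defImp} for $\varphi^\circ\to\psi^\circ=\neg\Diamond(\varphi^\circ,\neg\psi^\circ)$ quantify over the same triples $Rxyz$, so the IH closes it, as in Lemma~\ref{Routley-Kripke}. For $\circ$, the relevant clause and~\eqref{eq:circclause} coincide verbatim given the IH. No persistence is needed inside the induction, because the relevant clauses are literally the modal ones; the upset property is needed only so that $V^\uparrow$ is admissible.

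\emph{Left to right.} Suppose $\vdash_{\mathbf{B}}\varphi$, and take a $\mathbf{Mod.B^+}$-frame $\mathfrak{F}$, a valuation $V$ and a point $n\vDash\mathsf{t}$, so $n\in N$. Since $\mathfrak{F}$ is a Routley--Meyer frame and $V^\uparrow$ is persistent, soundness gives $n\Vdash\varphi$ under $V^\uparrow$. The truth lemma then gives $n\vDash\varphi^\circ$, hence $\mathfrak{F}\vDash\mathsf{t}\supset\varphi^\circ$.

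\emph{Right to left.} Given a Routley--Meyer model $(\mathfrak{F},V)$ with $V$ persistent and $n\in N$, view it as a Kripke model. By Lemma~\ref{lm:upsets}, $\|p^\circ\|_V=V(p)$, so $V^\uparrow=V$. Hence if $\vdash_{\mathbf{Mod.B^+}}\mathsf{t}\supset\varphi^\circ$, soundness gives $n\vDash\varphi^\circ$, and the truth lemma yields $n\Vdash\varphi$. Completeness of $\mathbf{B}$ then finishes the argument.

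The main obstacle I anticipate is making sure the completeness claims are properly in place. First, $\mathbf{Mod.B^+}$ must be complete for exactly the frames of Proposition~\ref{pr:ModcorrespondenceII}, via canonicity of the residuation axioms as sketched. Second, the residuation axioms must hold on every such frame, so that soundness applies. Checking that $[(p\to q)\circ p]\supset q$ and $p\supset[q\to(p\circ q)]$ are valid on all Kripke frames $(W,R,N)$ under the shared-$R$ clauses is a routine unfolding of definitions. I would verify it first, since the whole argument relies on the frame classes coinciding.
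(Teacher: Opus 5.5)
Your proposal is correct and is essentially the paper's own proof. The paper likewise defines the persistent valuation $V'(p):=\|p^\circ\|$, which is admissible by Lemma~\ref{lm:astareupsets}, for the left-to-right direction; for the right-to-left direction it uses Lemma~\ref{lm:upsets} to identify a persistent $V$ with $\|p^\circ\|$, and it closes both directions with the same clause-by-clause induction, so your single truth lemma for $V^\uparrow$ and your check that the residuation axioms hold on every frame $(W,R,N)$ are just a tidier packaging of that argument.
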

\addtocounter{theorem}{-1} 
}
\begin{proof}
    For right-to-left, assume $\vdash_{\mathbf{Mod.B^+}}\mathsf{t}\supset \varphi^\circ $ and let $(\mathfrak{F}, V)$ be an arbitrary Routley--Meyer model. Then $(\mathfrak{F}, V)$ is a $\mathbf{Mod.B^+}$ model as well, cf. Proposition~\ref{pr:ModcorrespondenceII}. By Lemma~\ref{lm:upsets}, for all $x \in \mathfrak{F}$,
    \[
        x\Vdash p\qquad \overset{\text{(def)}}{\text{iff}}\qquad x\vDash p\qquad \overset{\text{\ref{lm:upsets}}}{\text{iff}}\qquad x\vDash p^\circ.
    \]
    An induction readily extends it to all formulas $\alpha\in \mathcal{L}^+_{\circ, \mathsf{t}}$; i.e., for all $x \in \mathfrak{F}$,
    \[
        x\Vdash \alpha\qquad \text{iff}\qquad x\vDash \alpha^\circ.
    \]
    Therefore, if $n$ is a normal point, then $n\vDash \mathsf{t}$, so by soundness for $\mathbf{Mod.B}^+$, we get $n\vDash \varphi^\circ$, hence $n\Vdash \varphi$, which shows that $\vdash_\mathbf{B}\varphi$ by completeness for $\mathbf{B}$.

    Conversely, assume $\vdash_{\mathbf{B}}\varphi$ and let $(W, R, N, V)$ be an arbitrary $\mathbf{Mod.B^+}$ model. By completeness of $\mathbf{Mod.B^+}$ for the frames of Proposition~\ref{pr:ModcorrespondenceII}, it is enough to show that for all normal points $n\in N$, $n\vDash \varphi^\circ$. From Proposition~\ref{pr:ModcorrespondenceII} it also follows that $(W, R, N)$ is a Routley--Meyer frame. We define a valuation $V'$ over $(W, R, N)$ by setting
    \[
        V'(p)\mathrel{:=}\|p^\circ\|.
    \]
    This yields a well-defined Routley--Meyer model by Lemma~\ref{lm:astareupsets}. So by assumption and soundness for $\mathbf{B}$, for all normal points $n\in N$, $n\Vdash \varphi$. An easy induction then shows that for all formulas $\alpha\in \mathcal{L}^+_{\circ, \mathsf{t}}$ and $x \in W$,
    \[
        x\Vdash \alpha\qquad \text{iff}\qquad x\vDash \alpha^\circ,
    \]
    which completes the proof.
\end{proof}

{
\renewcommand{\thetheorem}{2.6} 
\begin{proposition}[Restated]\label{proofofpr:ModcorrespondenceIII}
    Let $\mathbb{F}$ be a Kripke frame. Then
    \begin{alignat*}{3}
        \mathbb{F}&\vDash \nBox p \subset\supset \neg \nBox \neg p &\qquad \text{iff} &\qquad &\text{the relation }^\ast \text{ is a function},\\
        \mathbb{F}&\vDash p\subset\supset \nBox \nBox p &\qquad \text{iff} &\qquad &^\ast \text{ is an involution}.
    \end{alignat*}
    In case $\mathbb{F}$ validates the first of the two axioms above, then
    \begin{alignat*}{3}
        \mathbb{F}&\vDash \nBox p\land (\mathsf{t}\circ q)\supset \mathsf{t}\circ(q\land \nBox (\mathsf{t}\circ  p)) &\qquad \text{iff} &\qquad &\text{ $x\leq y \;\Rightarrow \;y^\ast \leq x^\ast$}.
    \end{alignat*}
    In sum, a Kripke frame $\mathbb{F}=(W,R, N, {^\ast})$ is a Routley--Meyer frame iff it validates the axioms of Proposition~\ref{pr:ModcorrespondenceII} along with the ones listed here.
\end{proposition}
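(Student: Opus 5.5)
The first two equivalences are standard unary Sahlqvist correspondences, and the third is a direct first-order argument; I would verify each by hand. Throughout, $\nBox$ is the unary box of the relation ${}^\ast$, so $x\vDash \nBox\varphi$ iff $y\vDash\varphi$ for all $y$ with $x{}^\ast y$. The biconditional $\nBox p\subset\supset\neg\nBox\neg p$ splits into two directions. The direction $\neg\nBox\neg p\supset \nBox p$, i.e.\ $\Diamond p\supset\Box p$, corresponds to partial functionality: if $x{}^\ast y$ and $x{}^\ast y'$, set $V(p)=\{y\}$ to force $y=y'$. The direction $\nBox p\supset\neg\nBox\neg p$, i.e.\ $\Box p\supset\Diamond p$, corresponds to seriality. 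Together they say ${}^\ast$ is a total function. The converse direction is immediate. Similarly, $p\supset\nBox\nBox p$ corresponds to symmetry of ${}^\ast$, and $\nBox\nBox p\supset p$ to $\forall x\exists y(x{}^\ast y\wedge y{}^\ast x)$. Once ${}^\ast$ is known to be a function, these say exactly $x^{\ast\ast}=x$. Without functionality, the second item is stated as ``involution''; I would read it as the conjunction of the two relational conditions, which amounts to an involutive function in the presence of the first axiom.

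For the third item, assume ${}^\ast$ is a function, so $x\vDash\nBox\varphi$ iff $x^\ast\vDash\varphi$.

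\emph{Frame condition implies the axiom.} Suppose $x\leq y\Rightarrow y^\ast\leq x^\ast$, and let $u\vDash\nBox p\land(\mathsf{t}\circ q)$. Then $u^\ast\vDash p$, and there are $n\in N$ and $w$ with $Rnwu$ and $w\vDash q$, i.e.\ $w\leq u$. By the condition, $u^\ast\leq w^\ast$, so $w^\ast\vDash\mathsf{t}\circ p$, witnessed by $u^\ast$. Hence $w\vDash q\land\nBox(\mathsf{t}\circ p)$, and together with $w\leq u$ this gives $u\vDash\mathsf{t}\circ(q\land\nBox(\mathsf{t}\circ p))$.

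\emph{Axiom implies the frame condition.} Suppose $x\leq y$. Take $V(p)=\{y^\ast\}$ and $V(q)=\{x\}$. Then $y\vDash\nBox p$, and $y\vDash\mathsf{t}\circ q$ via $x\leq y$. The axiom yields some $w\leq y$ with $w\vDash q$, so $w=x$, and $x^\ast\vDash\mathsf{t}\circ p$. The latter means there exist $m\in N$ and $v\vDash p$ with $Rmvx^\ast$. Since $v=y^\ast$, we get $y^\ast\leq x^\ast$.

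The main obstacle is precisely this minimal-valuation step. I expect it to go through, since the antecedent is Sahlqvist-shaped once $\nBox$ is deterministic.

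The closing ``in sum'' claim then combines Proposition~\ref{pr:ModcorrespondenceII} with these three correspondences. Together they give exactly the Routley--Meyer conditions $x\leq y\Rightarrow y^\ast\leq x^\ast$ and $x^{\ast\ast}=x$, with ${}^\ast$ a function.
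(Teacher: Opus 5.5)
Your argument for the third correspondence is essentially the paper's. You use the same minimal valuation $V(p)=\{y^\ast\}$, $V(q)=\{x\}$ in one direction, and in the other you unpack $x\leq y\Rightarrow y^\ast\leq x^\ast$ into a normal witness for $y^\ast\leq x^\ast$ in the same way. Your treatment of the first axiom (partial functionality plus seriality) is also correct.

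The paper leaves the first two items as ``readily proven'', and in filling in the second you make a false claim: $p\supset\nBox\nBox p$ does not correspond to symmetry. Write $R_\ast$ for the relation interpreting $\nBox$. The valuation $V(p)=\{x\}$ shows that the correspondent is $xR_\ast y\wedge yR_\ast z\Rightarrow z=x$, i.e.\ every two-step successor of $x$ is $x$ itself. Symmetry is instead the correspondent of $p\supset\nBox\neg\nBox\neg p$. Two examples separate the conditions:
\begin{itemize}
\item The universal relation on $\{a,b\}$ is symmetric and puts every point on a $2$-cycle, yet refutes $p\supset\nBox\nBox p$: let $p$ hold only at $a$ and note $aR_\ast aR_\ast b$.
\item The relation $\{(a,b)\}$ is not symmetric but validates $p\supset\nBox\nBox p$ vacuously.
\end{itemize}
So your reading of ``involution'' without functionality, as symmetry plus $\forall x\exists y(xR_\ast y\wedge yR_\ast x)$, is not what the second axiom defines. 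As written, you only establish the second item under the first axiom, whereas the statement asserts it outright.

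The repair is short, and it shows the second item holds unconditionally. The axiom $p\subset\supset\nBox\nBox p$ corresponds exactly to $R_\ast^2=\mathrm{Id}$ (relational composition), and this alone makes $R_\ast$ an involutive total function:
\begin{itemize}
\item If $xR_\ast y$, every successor of $y$ is a two-step successor of $x$ and hence equals $x$. Also $y$ has some successor, since it lies on a $2$-cycle. So $yR_\ast x$.
\item If in addition $xR_\ast y'$, then $yR_\ast xR_\ast y'$ forces $y'=y$.
\item Seriality holds because every $x$ lies on a $2$-cycle.
\end{itemize}
So every frame validating the second axiom also validates the first. Once functionality is available, your symmetry condition and the correct condition both reduce to $x^{\ast\ast}=x$. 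The error therefore does not affect your third item or your concluding ``in sum'' claim.
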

\addtocounter{theorem}{-1} 
}
\begin{proof}
    The first two claims are readily proven; we show the third.


    Assume $\mathbb{F}\vDash \nBox p\land (\mathsf{t}\circ q)\supset \mathsf{t}\circ(q\land \nBox (\mathsf{t}\circ p))$ and $x\leq y$; i.e., that there is $m\in N$ s.t. $Rmxy$. We are to find $n\in N$ such that $Rny^\ast x^\ast$. Set $V(p)\mathrel{:=}\{y^\ast\}$, $V(q)\mathrel{:=}\{x\}$. Then $y\vDash \nBox p\land (\mathsf{t}\circ q)$, so since $\mathbb{F}\vDash \nBox p\land (\mathsf{t}\circ q)\supset \mathsf{t}\circ(q\land \nBox (\mathsf{t}\circ p))$, we have $y\vDash \mathsf{t}\circ(q\land \nBox (\mathsf{t}\circ p))$. Since $V(q)\mathrel{:=}\{x\}$, this entails that $x\vDash q\land \nBox (\mathsf{t}\circ p)$, whence $x^\ast \vDash \mathsf{t}\circ p$. Thus there are $n\in N, z\in W$ such that $Rnzx^\ast$ and $z\vDash p$. But then $V(p)\mathrel{:=}\{y^\ast\}$ implies $z=y^\ast$, and hence $Rny^\ast x^\ast$, as desired.


     Conversely, assume that for all $x,y\in W$: $x\leq y \;\Rightarrow \;y^\ast \leq x^\ast$. Let $V,y$ be arbitrary with $y\vDash \nBox p\land (\mathsf{t}\circ q)$. Then $y^\ast\vDash p$ and there are $m\in N$ and $x\in W$ with $Rmxy, x\vDash q$. It suffices to show that $x\vDash \nBox (\mathsf{t}\circ  p)$. By definition, $m\in N$ and $Rmxy$ mean that $x\leq y$, hence $y^\ast\leq x^\ast$, i.e. there is $n\in N$ with $Rny^\ast x^\ast$. So as $y^\ast \vDash p$, we have $x^\ast\vDash \mathsf{t}\circ  p$, which entails that $x\vDash \nBox (\mathsf{t}\circ p)$, as sufficed.
\end{proof}

\bibliographystyle{eptcs}
\bibliography{generic}

\end{document}